 \numberwithin{dummy}{section}
\newtheorem{algorithm}{Weak Galerkin Algorithm}
\newtheorem{remark}{Remark}
\newcommand{\bQ}{{\bf Q}}
\newcommand{\bu}{{\bf u}}
\newcommand{\bq}{{\bf q}}
\newcommand{\bw}{{\bf w}}
\newcommand{\be}{{\bf e}}
\newcommand{\bv}{{\bf v}}
\newcommand{\bH}{{\textbf{\textit{H}}}}
\newcommand{\bL}{{\textbf{\textit{L}}}}
\newcommand{\bpsi}{{\boldsymbol\psi}}
\newcommand{\bepsilon}{{\boldsymbol\epsilon}}
\def\Q{{\mathbb Q}}
\def\T{{\mathcal T}}
\def\E{{\mathcal E}}
\def\pT{{\partial T}}
\def\l{{\langle}}
\def\r{{\rangle}}
\def\T{{\mathcal T}}
\def\E{{\mathcal E}}
\def\bbf{{\bf f}}
\def\bg{{\bf g}}
\def\bn{{\bf n}}
\def\bq{{\bf q}}
\def\3bar{{|\hspace{-.02in}|\hspace{-.02in}|}}
  \def\b#1{\mathbf{#1}} 
\def\a#1{\begin{align*}#1\end{align*}}  
\def\p#1{\begin{pmatrix}#1\end{pmatrix}}
\title{A conforming discontinuous Galerkin finite element method 
  for the Stokes problem on polytopal meshes}
\author{Xiu Ye\thanks{Department of
Mathematics, University of Arkansas at Little Rock, Little Rock, AR
72204 (xxye@ualr.edu). This research was supported in part by
National Science Foundation Grant DMS-1620016.}
\and
Shangyou Zhang\thanks{Department of
Mathematical Sciences, University of Delaware, Newark, DE 19716 (szhang@udel.edu).}
}
\begin{document}
\maketitle

\begin{abstract}
A new discontinuous Galerkin finite element method for the Stokes equations is developed in the primary velocity-pressure formulation.
This method employs discontinuous polynomials for both velocity and pressure on general polygonal/polyhedral meshes. Most finite element
methods with discontinuous approximation have one or more stabilizing terms for velocity and for pressure to guarantee stability and convergence. This new finite element method has the standard conforming finite element formulation, without any velocity or pressure stabilizers. Optimal-order error estimates are
established for the corresponding numerical approximation in various
norms. The numerical examples are tested for low and  high order elements up to the degree four in 2D and 3D spaces.
\end{abstract}

\begin{keywords}
Weak gradient, weak divergence, discontinuous Galerkin,
   finite element methods, the Stokes equations,
polytopal meshes.
\end{keywords}

\begin{AMS}
Primary, 65N15, 65N30, 76D07; Secondary, 35B45, 35J50
\end{AMS}
\pagestyle{myheadings}

\section{Introduction}
Consider the Stokes problem: find the velocity $\bu$ and the
pressure $p$ such that
\begin{eqnarray}
-\Delta\bu+\nabla p&=& \bbf\quad
\mbox{in}\;\Omega,\label{moment}\\
\nabla\cdot\bu &=&0\quad \mbox{in}\;\Omega,\label{cont}\\
\bu&=&0\quad \mbox{on}\; \partial\Omega,\label{bc}
\end{eqnarray}
where $\Omega$ is a polygonal or polyhedral domain in
$\mathbb{R}^d\; (d=2,3)$.

In a conforming finite element method for solving above Stokes equations in primary variables
  \cite{gr,Scott-V,Zhang-HCT,Zhang-PS2,Zhang-Cube,Zhang-PS3}, such as the Taylor-Hood element,
  the velocity $\bu$ is approximated by continuous piecewise polynomials of degree $k$ and 
  the pressure $p$ by continuous/discontinuous piecewise polynomials of degree $k-1$,
  with the following formulation:  Find $\bu_h \in V_h\subset \bH^1_0(\Omega)$ and 
  $p_h \in W_h\subset L^2_0(\Omega)$ such that
\begin{align} (\nabla \bu_h, \nabla \bv_h) - (\nabla\cdot \b v_h, p_h) & = (\bbf, \bv_h)
          \quad \forall \bv_h \in V_h, \label{c-1} \\
             (\nabla\cdot \b u_h, q_h) & = 0
          \qquad \forall q_h \in W_h.\label{c-2} 
\end{align}

In a discontinuous Galerkin finite element method \cite{Zhang},
   the velocity $\bu$ is also approximated by piecewise polynomials of degree $k$ but discontinuous, and 
  the pressure $p$ by discontinuous piecewise polynomials of degree $k-1$ (or $k$)
  on polygonal/polyhedral meshes,
  with the following formulation: 
   Find $\bu_h \in V_h\subset \bL^2(\Omega)$ and 
  $p_h \in W_h\subset L^2_0(\Omega)$ such that
\begin{align} \sum_{T\in\T_h} \big[(\nabla \bu_h, \nabla \bv_h)_T -(p_h, \nabla \cdot
    \bv_h)_T\big]
     \qquad \qquad\qquad & \nonumber\\
   + \sum_{e\in\E_h} \Big( \int_e \{\nabla \bu_h\}\bn \cdot [\bv_h]
   + \epsilon^* \int_e \{\nabla \bv_h\}\bn \cdot [\bu_h]\qquad\qquad\qquad& \nonumber \\
    + \int_e \{p_h\}\bn \cdot [\bv_h] + \frac{\sigma_e}{h}
      \int_e [\bu_h] \cdot [\bv_h] \Big)
        = (\bbf, \bv_h)
          \quad &\forall \bv_h \in V_h, \label{d-1} \\
          \sum_{T\in\T_h}  (q_h, \nabla \cdot
    \bu_h)_T 
   + \sum_{e\in\E_h} \Big( \int_e \{q_h\}\bn \cdot [\bu_h]
  + h \int_e[p_h][q_h]\Big)  = 0
          \quad &\forall q_h \in W_h.\label{d-2} 
\end{align}
It is proved that the pressure stabilizer ($h \int_e[p_h][q_h]$) can be omitted in \eqref{d-2}
   on triangular/tetrahedral meshes.
This simplifies the discontinuous Galerkin finite element formulation.
We would simplify further the formulation \eqref{d-1}-\eqref{d-2} by dropping both 
  stabilizers and all boundary integral terms,  on general polygonal/polyhedral meshes.

In a conforming discontinuous Galerkin finite element method, the original weak
  formulation \eqref{c-1}-\eqref{c-2} of the continuous Galerkin finite element is kept.
But the gradient $\nabla u_h$ of a discontinuous, piecewise polynomial $u_h$ is no long 
a Lebesgue measurable function.
It can be represented as a function in a dual space, $\big(\prod_{T\in \T_h} \bH^1(T) \big)'$.
We define the $L^2$ projection of this function in a piecewise polynomial subspace as 
   a weak gradient, i.e., $\nabla_w u_h\in \prod_{T\in \T_h}  P_k(T) ^d $ such that
\begin{align*}  (\nabla_w u_h, \bq) 
       & =  (\nabla u_h, \bq) \quad \forall \bq \in \prod_{T\in \T_h}  P_j(T) ^d,
\end{align*} where $j\le k+n+d-3$ (for $n$-faced polygons/polyhedrons) and 
\begin{align*}  (\nabla u_h, \bq) 
       & = \sum_{T\in \T_h} (u_h, -\nabla\cdot \bq)_T 
                   +\l \{u_h\}, \bq\cdot \bn\r_\pT \quad \forall
             \bq \in \prod_{T\in \T_h}  \bH^1(T).
\end{align*} Such a method has been developed for Poisson equations \cite{Feng,yz1,yz},
 and for biharmonic equations \cite{Cui,yz-b}.
The definition of weak gradient comes from the weak Galerkin finite element method 
 \cite{cww,cfx,lllc,wy-stokes,wy,wymix,wwzz,zzm} and the modified weak Galerkin method
  \cite{mwy,tzz,wmgm}. A disadvantage of the conforming discontinuous Galerkin 
  finite element method is its computation of the gradient by higher order polynomials.
But this computation is done locally for basis functions only, in advance, 
    before generating and 
   solving the resulting linear systems of equations.
It is equivalent to using high-order quadrature formula in the continuous finite element.

In this paper, we propose a new finite element method for the Stokes equations with discontinuous approximations on general polytopal meshes. Our new finite element method uses totally discontinuous $kth$ degree polynomial for velocity and $(k-1)th$ degree polynomial for pressure. When discontinuous polynomials are employed for both velocity and pressure, stabilizers for velocity or pressure are normally required for the stability of the corresponding finite element formulations, such as \eqref{d-1}-\eqref{d-2}. 
But in this new conforming discontinuous Galerkin finite element method,  we do not have
 any boundary stabilizer term, neither any other boundary integral term. 
That is, we find $\bu_h \in V_h\subset \bL^2(\Omega)$ and 
  $p_h \in W_h\subset L^2_0(\Omega)$ such that
\begin{align}  (\nabla_w \bu_h, \nabla_w \bv_h)_T -(p_h, \nabla_w \cdot \bv_h)
       & = (\bbf, \bv_h)
          \quad \forall \bv_h \in V_h, \label{new-1} \\
          (q_h, \nabla_w \cdot \bu_h)_T    & = 0
          \qquad \forall q_h \in W_h.  \label{new-2}
\end{align}
To the best of our knowledge, our new method is the only finite element formulation without any velocity or pressure stabilizers among all the methods for the Stokes problem
in primary velocity-pressure form with discontinuous approximations on polytopal meshes.

Optimal order error estimates for the finite element approximations are derived in  energy norm for the velocity, and $L^2$ norm for both the velocity and the pressure.  Numerical examples are tested for the finite elements with different degrees up to $P_4$ polynomials and for different dimensions, 2D and 3D.

\section{ Finite Element Method}
We use standard definitions for the Sobolev spaces $H^s(D)$ and their associated
inner products $(\cdot, \cdot)_{s,D}$, norms $\|\cdot\|_{s,D}$, and seminorms $|\cdot|_{s,D}$ for $s\ge 0$.
When $D = \Omega$, we drop the subscript $D$ in the norm and inner product notation.

Let ${\cal T}_h$ be a partition of the domain $\Omega$ consisting of
polygons in two dimensional space or polyhedra in three dimensional space satisfying
a set of conditions specified in \cite{wymix}. Denote by ${\cal E}_h$ the set
of all  flat faces in ${\cal T}_h$, and let ${\cal
E}_h^0={\cal E}_h\backslash\partial\Omega$ be the set of all
interior faces.

For $k\ge 1$ and given $\T_h$, define two finite element spaces,  for approximating velocity
\begin{eqnarray}
V_h &=&\left\{ \bv\in \bL^2(\Omega):\ \bv|_{T}\in [P_{k}(T)]^d, \;\;\forall T\in\T_h \right\}\label{vh}
\end{eqnarray}
and for approximating pressure
\begin{equation}
W_h =\left\{q\in L_0^2(\Omega): \ q|_T\in P_{k-1}(T),\;\;\forall T\in\T_h\right\}.\label{wh}
\end{equation}

Let $T_1$ and $T_2$ be two elements in $\T_h$
sharing $e\in\E_h$.  For $e\in\E_h$ and $\bv\in V_h+\bH_0^1(\Omega) $, the jump $[\bv]$ is defined as
\begin{equation}\label{jump}
[\bv]=\bv\quad {\rm if} \;e\subset \partial\Omega,\quad [\bv]=\bv|_{T_1}-\bv|_{T_2}\;\; {\rm if} \;e\in\E_h^0.
\end{equation}
The order of $T_1$ and $T_2$ is not essential.

For $e\in\E_h$ and $\bv\in V_h+\bH_0^1(\Omega)$, the average $\{v\}$ is defined  as
\begin{equation}\label{avg}
\{\bv\}=0\quad {\rm if} \;e\subset \partial\Omega,\quad \{\bv\}=\frac12(\bv|_{T_1}+\bv|_{T_2})\;\; {\rm if} \;e\in\E_h^0,
\end{equation}

For a function $\bv\in V_h+\bH_0^1(\Omega)$, its weak gradient $\nabla_w\bv$ is a piecewise polynomial tensor such that $\nabla_w\bv \in \prod_{T\in\T_h} [P_{j}(T)]^{d\times d}$  and  satisfies the following equation,
\begin{equation}\label{wg}
(\nabla_w\bv,\  \tau)_T =  (\bv,\  -\nabla \cdot \tau)_T+
\l\{\bv\}, \ \tau\cdot\bn \r_\pT\quad\forall\tau\in [P_{j}(T)]^{d\times d} 
\end{equation}  on each $T\in\T_h$, 
and  its weak divergence $\nabla_w\cdot\bv$ is a piecewise polynomial such that $\nabla_w\cdot\bv \in  \prod_{T\in\T_h}  P_{k-1}(T)$  and  satisfies the following equation,
\begin{equation}\label{wd}
( \nabla_{w}\cdot\bv,\  q)_T =  ( \bv,\ -\nabla q)_T+\l
  \{\bv\}\cdot\bn,\ q\r_\pT\quad\forall q\in P_{k-1}(T) 
\end{equation} on each $T\in\T_h$.

\begin{remark}
The choice of $j$ in (\ref{wg}) depends on the number of sides/faces of polygon/polyhedron. For triangular mesh, we can choose  $j=k+1$ \cite{aw}. In general, $j=n+k-1$, where $n$ is the number of edges of polygon \cite{yz}.
\end{remark}

\medskip

Then we have the following simple penalty free finite element scheme.

\medskip

\begin{algorithm}
A numerical approximation for (\ref{moment})-(\ref{bc}) can be
obtained by seeking $\bu_h\in V_h$ and $p_h\in W_h$ such that for all $\bv\in V_h$ and $q\in W_h$,
\begin{eqnarray}
(\nabla_w\bu_h,\ \nabla_w\bv)-(\nabla_w\cdot\bv,\;p_h)&=&(f,\;\bv),\label{wg1}\\
(\nabla_w\cdot\bu_h,\;q)&=&0.\label{wg2}
\end{eqnarray}
\end{algorithm}

Let $\Q_h$, $\bQ_h$  and $Q_h$ be the element-wise defined $L^2$ projections onto the local spaces $[P_{j}(T)]^{d\times d}$, $[P_{k}(T)]^{d}$  and $P_{k-1}(T)$ for $T\in\T_h$, respectively.

\begin{lemma}
Let $\boldsymbol\phi\in \bH_0^1(\Omega)$, then on  $T\in\T_h$
\begin{eqnarray}
\nabla_w \boldsymbol\phi &=&\Q_h\nabla\boldsymbol\phi,\label{key}\\
\nabla_w\cdot \boldsymbol\phi &=&Q_h\nabla\cdot\boldsymbol\phi.\label{key1}
\end{eqnarray}
\end{lemma}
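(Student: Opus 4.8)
The plan is to reduce both identities to the ordinary integration-by-parts formula on a single element $T$, exploiting the fact that an $\bH_0^1$ function carries no jump across faces. The crucial preliminary observation I would establish first is that for $\boldsymbol\phi\in\bH_0^1(\Omega)$ the average coincides with the function itself on each face of $\pT$: on an interior face the single-valued trace gives $\boldsymbol\phi|_{T_1}=\boldsymbol\phi|_{T_2}$, so that $\{\boldsymbol\phi\}=\boldsymbol\phi$, while on a boundary face the homogeneous boundary condition forces both $\{\boldsymbol\phi\}=0$ and $\boldsymbol\phi=0$. Hence $\{\boldsymbol\phi\}=\boldsymbol\phi$ on $\pT$ in every case.

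For (\ref{key}) I would insert this identity into the defining relation (\ref{wg}), obtaining
\[
(\nabla_w\boldsymbol\phi,\tau)_T=(\boldsymbol\phi,-\nabla\cdot\tau)_T+\l\boldsymbol\phi,\ \tau\cdot\bn\r_\pT
\]
for every $\tau\in[P_{j}(T)]^{d\times d}$. The right-hand side is exactly the integration-by-parts expansion of $(\nabla\boldsymbol\phi,\tau)_T$ obtained from the divergence theorem applied to the tensor field $\tau$, so it equals $(\nabla\boldsymbol\phi,\tau)_T$; by the definition of the $L^2$ projection this in turn equals $(\Q_h\nabla\boldsymbol\phi,\tau)_T$. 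Since $\nabla_w\boldsymbol\phi$ and $\Q_h\nabla\boldsymbol\phi$ both lie in $[P_{j}(T)]^{d\times d}$ and pair identically against every test tensor there, they coincide, which is (\ref{key}).

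The proof of (\ref{key1}) would follow the scalar analogue verbatim: substituting $\{\boldsymbol\phi\}=\boldsymbol\phi$ into (\ref{wd}) yields $(\nabla_w\cdot\boldsymbol\phi,q)_T=(\boldsymbol\phi,-\nabla q)_T+\l\boldsymbol\phi\cdot\bn,\ q\r_\pT=(\nabla\cdot\boldsymbol\phi,q)_T$ for all $q\in P_{k-1}(T)$, after which testing against $P_{k-1}(T)$ identifies $\nabla_w\cdot\boldsymbol\phi$ with $Q_h\nabla\cdot\boldsymbol\phi$.

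I do not anticipate any genuine obstacle here; the statement is a direct verification once the no-jump property $\{\boldsymbol\phi\}=\boldsymbol\phi$ is established, and the heart of the matter is simply recognizing that the boundary term in the weak definition reassembles, via the divergence theorem, into the classical gradient. The only technical point deserving care is the validity of the integration-by-parts formula for an $H^1$ field rather than a smooth one, which is justified by the trace theorem together with a density argument, and the implicit reliance on the element-wise traces of $\boldsymbol\phi$ being well defined on $\pT$.
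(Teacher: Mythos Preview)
Your proposal is correct and matches the paper's own proof essentially line for line: the paper also replaces $\{\boldsymbol\phi\}$ by $\boldsymbol\phi$ in the definition (\ref{wg}), integrates by parts to recover $(\nabla\boldsymbol\phi,\tau)_T=(\Q_h\nabla\boldsymbol\phi,\tau)_T$, and then states that (\ref{key1}) follows ``similarly.'' Your additional remarks on the trace theorem and density are not in the paper but are harmless elaborations.
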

\begin{proof}
Using (\ref{wg}) and  integration by parts, we have that for
any $\tau\in [P_{j}(T)]^{d\times d}$
\begin{eqnarray*}
(\nabla_w \boldsymbol\phi,\tau)_T &=& -(\boldsymbol\phi,\nabla\cdot\tau)_T
+\langle \{\boldsymbol\phi\},\tau\cdot\bn\rangle_{\pT}\\
&=& -(\boldsymbol\phi,\nabla\cdot\tau)_T
+\langle \boldsymbol\phi,\tau\cdot\bn\rangle_{\pT}\\
&=&(\nabla \boldsymbol\phi,\tau)_T=(\Q_h\nabla\phi,\tau)_T,
\end{eqnarray*}
which implies the desired identity (\ref{key}). Similarly, we can prove (\ref{key1}).
\end{proof}

For any function $\varphi\in H^1(T)$, the following trace
inequality holds true (see \cite{wymix} for details):
\begin{equation}\label{trace}
\|\varphi\|_{e}^2 \leq C \left( h_T^{-1} \|\varphi\|_T^2 + h_T
\|\nabla \varphi\|_{T}^2\right).
\end{equation}

\section{Well Posedness}
We start this section by introducing two semi-norms $\3bar \bv\3bar$ and  $\|\bv\|_{1,h}$
   for any $\bv\in V_h$ as follows:
\begin{eqnarray}
\3bar \bv\3bar^2 &=& \sum_{T\in\T_h}(\nabla_w\bv,\nabla_w\bv)_T, \label{norm1}\\
\|\bv\|_{1,h}^2&=&\sum_{T\in \T_h}\|\nabla \bv\|_T^2+\sum_{e\in\E_h}h_e^{-1}\|[\bv]\|_{e}^2.\label{norm2}
\end{eqnarray}

It is easy to see that $\|\bv\|_{1,h}$ defines a norm in $V_h$.
But $\3bar  \cdot \3bar$ also defines a norm in $V_h$, by  
the following norm equivalence which has been proved in  \cite{aw,yz},
   to each component of $\bv$.
\begin{equation}\label{happy}
C_1\|\bv\|_{1,h}\le \3bar \bv\3bar\le C_2 \|\bv\|_{1,h} \quad\forall \bv\in V_h.
\end{equation}

The inf-sup condition for the finite element  formulation (\ref{wg1})-(\ref{wg2}) will be derived in the following lemma.

\smallskip
\begin{lemma}\label{Lemma:inf-sup}
There exists a positive constant $\beta$ independent of $h$ such that for all $\rho\in W_h$ and $h$ small enough,
\begin{equation}\label{inf-sup}
\sup_{\bv\in V_h}\frac{(\nabla_w\cdot\bv,\rho)}{\3bar\bv\3bar}\ge \beta
\|\rho\|.
\end{equation}
\end{lemma}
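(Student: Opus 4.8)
The plan is to reduce the discrete inf-sup condition to the classical continuous inf-sup condition for the Stokes problem and to bridge the gap with a Fortin-type interpolation. First I would invoke the well-known result (see \cite{gr}) that the divergence operator is surjective: for every $\rho\in L_0^2(\Omega)$ there exists $\tilde\bv\in\bH_0^1(\Omega)$ with $\nabla\cdot\tilde\bv=\rho$ and $\|\tilde\bv\|_1\le C\|\rho\|$. Since $\rho\in W_h$ we have $Q_h\rho=\rho$, so the commuting identity \eqref{key1} gives $\nabla_w\cdot\tilde\bv=Q_h\nabla\cdot\tilde\bv=Q_h\rho=\rho$. Thus $\tilde\bv$ is an exact weak-divergence preimage of $\rho$; the only defect is that $\tilde\bv\notin V_h$, which is precisely what a Fortin operator is designed to repair.

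Next I would construct a stable interpolation $\bP_h:\bH_0^1(\Omega)\to V_h$ that commutes with the weak divergence when tested against $W_h$. On each $T\in\T_h$ I would define $\bP_h\bw\in[P_k(T)]^d$ by the moment conditions $(\bP_h\bw-\bw,\ \nabla q)_T=0$ for all $q\in P_{k-1}(T)$, together with the face conditions $\l(\bP_h\bw-\bw)\cdot\bn,\ q\r_e=0$ for all $q\in P_{k-1}(e)$ and each face $e\subset\pT$, fixing any remaining local degrees of freedom by matching the lowest moments of $\bw$ so that the element-level problem is uniquely solvable. Because the two one-sided traces of $\bP_h\bw$ both match $\bw\cdot\bn$ in the $P_{k-1}$ sense, the average $\{\bP_h\bw\}$ inherits the matching on interior faces, while on $\partial\Omega$ both $\{\bP_h\bw\}$ and $\bw$ vanish by \eqref{avg} and the boundary condition. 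A direct computation using \eqref{wd} and integration by parts then yields, for every $q\in P_{k-1}(T)$,
\begin{align*}
(\nabla_w\cdot\bP_h\bw,\ q)_T
&= -(\bP_h\bw,\ \nabla q)_T + \l\{\bP_h\bw\}\cdot\bn,\ q\r_\pT \\
&= -(\bw,\ \nabla q)_T + \l\bw\cdot\bn,\ q\r_\pT
= (\nabla\cdot\bw,\ q)_T,
\end{align*}
so that $\nabla_w\cdot\bP_h\bw=Q_h(\nabla\cdot\bw)=\nabla_w\cdot\bw$ on each element. Taking $\bw=\tilde\bv$ and $\bv=\bP_h\tilde\bv\in V_h$ then gives $(\nabla_w\cdot\bv,\rho)=(\rho,\rho)=\|\rho\|^2$.

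It remains to bound $\3bar\bv\3bar$. By the norm equivalence \eqref{happy} it suffices to control $\|\bP_h\tilde\bv\|_{1,h}$, which I would obtain by a standard scaling and Bramble--Hilbert argument on a reference cell: the moment-defining map is bounded there, and the mesh shape-regularity assumptions of \cite{wymix} transfer the bound to each $T$, controlling both $\|\nabla\bP_h\tilde\bv\|_T$ and the jump seminorm $h_e^{-1/2}\|[\bP_h\tilde\bv]\|_e$ by the local $H^1$-norm of $\tilde\bv$. Summing gives $\3bar\bv\3bar\le C\|\tilde\bv\|_1\le C\|\rho\|$, and therefore $\sup_{\bv\in V_h}(\nabla_w\cdot\bv,\rho)/\3bar\bv\3bar\ge\|\rho\|^2/(C\|\rho\|)=\beta\|\rho\|$. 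I expect the main obstacle to be the construction of $\bP_h$ itself: verifying that the combined volume and face moment system is solvable with the right count of degrees of freedom for all $k\ge1$ on general polytopal elements, and establishing its $h$-uniform stability. The hypothesis that $h$ be small enough should enter exactly here, through the mesh-regularity constants governing this interpolation, rather than in the algebraic reduction above.
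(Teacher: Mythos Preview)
Your strategy---build a Fortin interpolant $\bP_h$ that commutes exactly with the weak divergence---is the textbook route on simplicial meshes, but it breaks down in the polytopal setting that the lemma is stated for. The face constraints $\l(\bP_h\bw-\bw)\cdot\bn,q\r_e=0$ for all $q\in P_{k-1}(e)$ impose $k$ scalar conditions per edge (in 2D); for an $n$-gon this is already $nk$ conditions, to which you add the volume gradient moments. The target space $[P_k(T)]^d$ has only $(k+1)(k+2)$ degrees of freedom in two dimensions, so for $k=1$ the system is overdetermined as soon as $n>6$, and for $k=2$ as soon as $n>5$. Since the paper explicitly works on meshes containing dodecagons, your local problem for $\bP_h$ is not solvable in general, and the Fortin construction collapses. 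The reference-cell scaling you invoke for stability is likewise unavailable: general polytopes have no common reference configuration, which is why the shape-regularity framework of \cite{wymix} replaces that mechanism with direct trace and inverse estimates.

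The paper sidesteps both obstacles by taking the crude choice $\bv=\bQ_h\tilde\bv$, the elementwise $L^2$ projection. This operator is trivially well-defined and stable on any polytope, and one gets $\3bar\bQ_h\tilde\bv\3bar\le C\|\tilde\bv\|_1$ directly from \eqref{happy} and the approximation properties of $\bQ_h$. The price is that $\bQ_h$ does not commute with the weak divergence; expanding $(\nabla_w\cdot\bQ_h\tilde\bv,\rho)$ via \eqref{wd} produces the desired term $(\nabla\cdot\tilde\bv,\rho)$ plus a boundary residual $\l\{\bQ_h\tilde\bv-\tilde\bv\},\rho\bn\r_{\partial\T_h}$ of size $O(h)\|\tilde\bv\|_1\|\rho\|$. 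This residual is absorbed for $h$ small enough, which is exactly where the smallness hypothesis enters---not in any interpolation stability constant, contrary to your final remark.
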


\begin{proof}
For any given $\rho\in W_h\subset L_0^2(\Omega)$, it is known
\cite{gr} that there exists a function
$\tilde\bv\in \bH_0^1(\Omega)$ such that
\begin{equation}\label{c-inf-sup}
\frac{(\nabla\cdot\tilde\bv,\rho)}{\|\tilde\bv\|_1}\ge C_0\|\rho\|,
\end{equation}
where $C_0>0$ is a constant independent of $h$. By
setting $\bv=\bQ_h\tilde{\bv}\in V_h$, we claim that the following
holds true
\begin{equation}\label{m9}
\3bar\bv\3bar\le C\|\tilde{\bv}\|_1.
\end{equation}
It follows from (\ref{happy}) and $\tilde{\bv}\in \bH_0^1(\Omega)$,
\begin{eqnarray*}
\3bar \bv\3bar^2&\le& C\|\bv\|_{1,h}^2=C(\sum_{T\in \T_h}\|\nabla \bv\|_T^2+\sum_{e\in\E_h}h_e^{-1}\|[\bv]\|_{e}^2)\\
&\le&C \sum_{T\in \T_h}\|\nabla \bQ_h\tilde{\bv}\|_T^2+\sum_{e\in\E_h}h_e^{-1}\|[\bQ_h\tilde{\bv}-\tilde{\bv}]\|_{e}^2\\
&\le& C\|\tilde{\bv}\|_1^2,
\end{eqnarray*}
which implies the  inequality (\ref{m9}).
It follows from (\ref{wd}) and (\ref{c-inf-sup}) that
\begin{eqnarray*}
|(\nabla_w\cdot\bv,\;\rho)_{\T_h}|&=&|-(\bv,\;\nabla\rho)_{\T_h}+\l\{\bv\},\rho\bn\r_{\partial\T_h}|\\
&=&|-(\tilde\bv,\;\nabla\rho)_{\T_h}+\l\{\bQ_h\tilde\bv\},\rho\bn\r_{\partial\T_h}|\\
&=&|(\nabla\cdot\tilde\bv,\;\rho)_{\T_h}+\l\{\bQ_h\tilde\bv-\tilde\bv\},\rho\bn\r_{\partial\T_h}|\\
&\ge|&(\nabla\cdot\tilde\bv,\;\rho)_{\T_h}|-C_1h\|\tilde{\bv}\|_1\|\rho\|\\
&\ge&(C_0-C_1h)\|\tilde{\bv}\|_1\|\rho\|.
\end{eqnarray*}
Using the above equation and (\ref{m9}), we have
\begin{eqnarray*}
\frac{|(\nabla_w\cdot\bv,\rho)|} {\3bar\bv\3bar} &\ge &
\frac{(C_0-C_1h)\|\tilde{\bv}\|_1\|\rho\|}{C_0\|\tilde\bv\|_1}\ge
\beta\|\rho\|
\end{eqnarray*}
for a positive constant $\beta$. This completes the proof of the lemma.
\end{proof}

\begin{lemma}
The weak Galerkin method (\ref{wg1})-(\ref{wg2}) has a unique solution.
\end{lemma}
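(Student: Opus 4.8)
The plan is to exploit that (\ref{wg1})--(\ref{wg2}) is a square linear system posed on the finite-dimensional space $V_h\times W_h$, so that existence and uniqueness are equivalent and it suffices to show that the homogeneous problem (with $\bbf=0$) admits only the trivial solution $\bu_h=0$, $p_h=0$.

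First I would set $\bbf=0$ and test (\ref{wg1}) with $\bv=\bu_h$ and (\ref{wg2}) with $q=p_h$. Since $p_h\in W_h$, the second equation yields $(\nabla_w\cdot\bu_h,\,p_h)=0$, which cancels the pressure-coupling term in the first; what remains is $(\nabla_w\bu_h,\,\nabla_w\bu_h)=\3bar\bu_h\3bar^2=0$. Because $\3bar\cdot\3bar$ is a genuine norm on $V_h$ by the equivalence (\ref{happy}), this forces $\bu_h=0$. Next, with $\bu_h=0$ in hand, equation (\ref{wg1}) reduces to $(\nabla_w\cdot\bv,\,p_h)=0$ for every $\bv\in V_h$. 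Feeding this into the discrete inf-sup bound of Lemma \ref{Lemma:inf-sup} gives $\beta\|p_h\|\le \sup_{\bv\in V_h}(\nabla_w\cdot\bv,\,p_h)/\3bar\bv\3bar=0$, so $p_h=0$. This establishes uniqueness for the homogeneous problem and hence, by the square-system argument, well-posedness of (\ref{wg1})--(\ref{wg2}).

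The argument is essentially the standard Brezzi saddle-point recipe, so no step presents a serious obstacle; the only care needed is that the two ingredients it relies on---the norm property of $\3bar\cdot\3bar$ and the discrete inf-sup condition---have already been secured in (\ref{happy}) and Lemma \ref{Lemma:inf-sup}, respectively, the latter requiring $h$ sufficiently small. I would note that the coercivity used in the first step is automatic here, since the bilinear form $(\nabla_w\bu_h,\,\nabla_w\bv)$ equals the square of the energy norm on all of $V_h$ rather than merely on the divergence-free kernel, which slightly shortens the usual coercivity-plus-inf-sup verification.
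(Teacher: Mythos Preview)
Your proof is correct and follows essentially the same argument as the paper: reduce to the homogeneous problem, test with $\bv=\bu_h$ and $q=p_h$ to obtain $\3bar\bu_h\3bar=0$, invoke (\ref{happy}) to conclude $\bu_h=0$, and then use the inf-sup condition (\ref{inf-sup}) to force $p_h=0$. Your added remarks about the square linear system and the small-$h$ hypothesis in Lemma~\ref{Lemma:inf-sup} are appropriate clarifications.
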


\smallskip

\begin{proof}
It suffices to show that zero is the only solution of
(\ref{wg1})-(\ref{wg2}) if $\bbf=0$. To this end, let $\bbf=0$ and
take $\bv=\bu_h$ in (\ref{wg1}) and $q=p_h$ in (\ref{wg2}). By adding the
two resulting equations, we obtain
\[
(\nabla_w\bu_h,\ \nabla_w\bu_h)_{\T_h}=0,
\]
which implies that $\nabla_w \bu_h=0$ on each element $T$. By (\ref{happy}), we have $\|\bu_h\|_{1,h}=0$ which implies that $\bu_h=0$.

Since $\bu_h=0$ and $\bbf=0$, the equation (\ref{wg1}) becomes $(\nabla_w\bv,\ p_h)=0$ for any $\bv\in V_h$. Then the inf-sup condition (\ref{inf-sup}) implies $p_h=0$. We have proved the lemma.
\end{proof}

\section{Error Equations}
In this section, we will derive the equations that the errors satisfy. 
Let $\be_h=\bQ_h\bu-\bu_h$, $\bepsilon_h=\bu-\bu_h$ and $\varepsilon_h=Q_hp-p_h$.

\begin{lemma}
For any $\bv\in V_h$ and $q\in W_h$, the following error equations hold true,
\begin{eqnarray}
(\nabla_w\be_h,\; \nabla_w\bv)_{\T_h}-(\varepsilon_h,\;\nabla_w\cdot\bv)&=&\ell_1(\bu,\bv)-\ell_2(\bu,\bv)-\ell_3(p,\bv),\label{ee1}\\
(\nabla_w\cdot\be_h,\ q)&=&-\ell_4(\bu,q),\label{ee2}
\end{eqnarray}
where
\begin{eqnarray}
\ell_1(\bu,\ \bv)&=&\l  (\nabla\bu-\Q_h(\nabla\bu))\cdot\bn,\;\bv-\{\bv\}\r_{\partial\T_h},\label{l1}\\
\ell_2(\bu,\bv)&=&(\nabla_w (\bu-\bQ_h\bu),\nabla_w\bv)_{\T_h},\label{l3}\\
\ell_3(p,\bv)&=&\l p-Q_hp, (\bv-\{\bv\})\cdot\bn\r_{\partial\T_h},\label{l5}\\
\ell_4(\bu,q)&=&\l\{\bu-\bQ_h\bu\}\cdot\bn,q\r_{\partial\T_h}.\label{l7}
\end{eqnarray}
\end{lemma}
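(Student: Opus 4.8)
The plan is to test the exact momentum equation against a discrete velocity $\bv\in V_h$ and compare it term by term with the scheme (\ref{wg1})--(\ref{wg2}), reducing every discrepancy to boundary integrals that assemble into the $\ell_i$. Throughout I would assume the usual regularity of the exact solution (so that $\nabla\bu$, $p$ admit $L^2$-projections and elementwise integration by parts is legitimate) and reuse two ``integration-by-parts defect'' identities: comparing the weak definition (\ref{wg}) with classical integration by parts gives $(\nabla_w\bv-\nabla\bv,\tau)_T=\langle\{\bv\}-\bv,\tau\cdot\bn\rangle_{\partial T}$ for all $\tau\in[P_j(T)]^{d\times d}$, and similarly from (\ref{wd}) one gets $(\nabla_w\cdot\bv-\nabla\cdot\bv,q)_T=\langle(\{\bv\}-\bv)\cdot\bn,q\rangle_{\partial T}$ for $q\in P_{k-1}(T)$.

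I would dispose of the easier equation (\ref{ee2}) first. Since $(\nabla_w\cdot\bu_h,q)=0$ by (\ref{wg2}), it suffices to evaluate $(\nabla_w\cdot\bQ_h\bu,q)$. Applying (\ref{wd}) on each $T$, using that $\bQ_h$ may be dropped against $\nabla q\in[P_{k-2}(T)]^d$, integrating by parts, and invoking $\nabla\cdot\bu=0$, I am left with $\sum_T\langle(\{\bQ_h\bu\}-\bu)\cdot\bn,q\rangle_{\partial T}$. Because $\bu\in\bH_0^1(\Omega)$ is single valued on each face, $\bu=\{\bu\}$ there, so $\{\bQ_h\bu\}-\bu=\{\bQ_h\bu-\bu\}$, which is exactly $-\ell_4(\bu,q)$.

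For (\ref{ee1}) I would expand $(\nabla_w\be_h,\nabla_w\bv)-(\varepsilon_h,\nabla_w\cdot\bv)$ and eliminate $\bu_h,p_h$ via (\ref{wg1}), collapsing it to $(\nabla_w\bQ_h\bu,\nabla_w\bv)-(\bbf,\bv)-(Q_hp,\nabla_w\cdot\bv)$. Writing $(\nabla_w\bQ_h\bu,\nabla_w\bv)=(\nabla_w\bu,\nabla_w\bv)-\ell_2(\bu,\bv)$ and using $\nabla_w\bu=\Q_h\nabla\bu$ from (\ref{key}) together with the gradient defect identity at $\tau=\Q_h\nabla\bu$, I convert $(\nabla_w\bu,\nabla_w\bv)$ into $\sum_T(\nabla\bu,\nabla\bv)_T$ plus a boundary term carrying $\Q_h\nabla\bu$. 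The divergence defect identity turns $(Q_hp,\nabla_w\cdot\bv)$ into $\sum_T(p,\nabla\cdot\bv)_T$ plus a boundary term carrying $Q_hp$. Meanwhile integrating $(\bbf,\bv)=(-\Delta\bu+\nabla p,\bv)$ by parts elementwise yields $\sum_T(\nabla\bu,\nabla\bv)_T-\sum_T(p,\nabla\cdot\bv)_T$ plus boundary terms carrying the exact $\nabla\bu$ and $p$. When everything is combined, the volume terms $\sum_T(\nabla\bu,\nabla\bv)_T$ and $\sum_T(p,\nabla\cdot\bv)_T$ cancel, leaving only boundary integrals together with $-\ell_2$.

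The final step, which I expect to be the main obstacle, is the bookkeeping of these boundary integrals. Here I would exploit that $\nabla\bu$ and $p$ are single valued across interior faces and that $\{\bv\}$ is single valued, so that $\sum_T\langle(\text{single-valued})\cdot\bn,\{\bv\}\rangle_{\partial T}=0$: the outward normals of adjacent elements cancel on interior faces, and $\{\bv\}=0$ on $\partial\Omega$. This lets me replace each exact-solution boundary factor $\bv$ by $\bv-\{\bv\}$, after which the $\nabla\bu$-contributions collect into $\ell_1(\bu,\bv)$ and the $p$-contributions into $-\ell_3(p,\bv)$, producing precisely $\ell_1-\ell_2-\ell_3$. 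The delicacy lies in tracking the normal sign conventions across shared faces and in checking at each stage that the polynomial degrees match the projection ranges (for instance $\nabla q\in[P_{k-2}]^d$, $\nabla\cdot\bv\in P_{k-1}$, and $\nabla\bv\in[P_{k-1}]^{d\times d}\subset[P_j]^{d\times d}$), so that every $L^2$-projection may be silently inserted or removed.
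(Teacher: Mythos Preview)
Your proposal is correct and follows essentially the same route as the paper. The only differences are organizational: the paper starts from the PDE side, derives an identity for the exact solution in weak-gradient/weak-divergence form (its (\ref{mm4}) and (\ref{mm5})), and then subtracts the scheme, whereas you start from the error expression, subtract the scheme first, and then reduce to exact-solution quantities; your ``defect identities'' $(\nabla_w\bv-\nabla\bv,\tau)_T=\langle\{\bv\}-\bv,\tau\cdot\bn\rangle_{\partial T}$ and its divergence analogue are exactly what the paper obtains line by line in (\ref{j1}) and (\ref{mm3}).
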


\begin{proof}
First, we test (\ref{moment}) by
$\bv\in V_h$ to obtain
\begin{equation}\label{mm0}
-(\Delta\bu,\;\bv)+(\nabla p,\ \bv)=(\bbf,\; \bv).
\end{equation}
Integration by parts and the fact $\langle
\nabla \bu\cdot\bn,\;\{\bv\}\rangle_{\partial\T_h}=0$ give
\begin{equation}\label{mm1}
-(\Delta\bu,\;\bv)=(\nabla \bu,\nabla \bv)_{\T_h}- \langle
\nabla \bu\cdot\bn,\bv-\{\bv\}\rangle_{\partial\T_h}.
\end{equation}
It follows from integration by parts, (\ref{wg}) and (\ref{key}),
\begin{eqnarray}
(\nabla \bu,\nabla \bv)_{\T_h}&=&(\Q_h\nabla  \bu,\nabla \bv)_{\T_h}\nonumber\\
&=&-(\bv,\nabla\cdot (\Q_h\nabla \bu))_{\T_h}+\langle \bv, \Q_h\nabla \bu\cdot\bn\rangle_{\partial\T_h}\nonumber\\
&=&(\Q_h\nabla \bu, \nabla_w \bv)_{\T_h}+\langle \bv-\{\bv\},\Q_h\nabla \bu\cdot\bn\rangle_{\partial\T_h}\nonumber\\
&=&( \nabla_w \bu, \nabla_w \bv)_{\T_h}+\langle \bv-\{\bv\},\Q_h\nabla \bu\cdot\bn\rangle_{\partial\T_h}.\label{j1}
\end{eqnarray}
Combining (\ref{mm1}) and (\ref{j1}) gives
\begin{eqnarray}
-(\Delta\bu,\;\bv)&=&(\nabla_w \bu,\nabla_w\bv)_{\T_h}-\ell_1(\bu,\bv).\label{mm2}
\end{eqnarray}
Using integration by parts and $\bv\in V_h$ and (\ref{wd}),  we have
\begin{eqnarray}
(\nabla p,\ \bv)&=& -(p,\nabla\cdot\bv)_{\T_h}+\l p, \bv\cdot\bn\r_{\partial\T_h}\nonumber\\
&=&-(Q_hp,\nabla\cdot\bv)_{\T_h}+\l p, (\bv-\{\bv\})\cdot\bn\r_{\partial\T_h}\nonumber\\
&=&(\nabla Q_hp,\;\bv)_{\T_h}-\l Q_hp, \bv\cdot\bn\r_{\partial\T_h}+\l p, (\bv-\{\bv\})\cdot\bn\r_{\partial\T_h}\nonumber\\
&=&-(Q_hp,\nabla_w\cdot\bv)_{\T_h}-\l Q_hp,\; (\bv-\{\bv\})\cdot\bn\r_{\partial\T_h}+\l p,\; (\bv-\{\bv\})\cdot\bn\r_{\partial\T_h}\nonumber\\
&=&-(Q_hp,\nabla_w\cdot\bv)_{\T_h}+\ell_3(p,\bv).\label{mm3}
\end{eqnarray}
Substituting (\ref{mm2}) and (\ref{mm3}) into (\ref{mm0}) gives
\begin{equation}\label{mm4}
(\nabla_w\bu,\nabla_w\bv)_{\T_h}-(Q_hp,\nabla_w\cdot\bv)_{\T_h}=(\bbf,\bv)+\ell_1(\bu,\bv)-\ell_3(p,\bv).
\end{equation}
The difference of (\ref{mm4}) and (\ref{wg1}) implies
\begin{equation}\label{mm10}
(\nabla_w\bepsilon_h,\nabla_w\bv)_{\T_h}-(\varepsilon_h,\nabla_w\cdot\bv)_{\T_h}=\ell_1(\bu,\bv)-\ell_3(p,\bv)\quad\forall\bv\in V_h.
\end{equation}
Adding and subtracting $(\nabla_w\bQ_h\bu,\nabla_w\bv)_{\T_h}$ in (\ref{mm10}), we have
\begin{equation}\label{mm11}
(\nabla_w\be_h,\nabla_w\bv)_{\T_h}-(\varepsilon_h,\nabla_w\cdot\bv)_{\T_h}=\ell_1(\bu,\bv)-\ell_2(\bu,\bv)-\ell_3(p,\bv),
\end{equation}
which implies (\ref{ee1}).

Testing equation (\ref{cont}) by $q\in W_h$ and using (\ref{wd}) give
\begin{eqnarray*}
(\nabla\cdot\bu,\;q)&=&-(\bu,\;\nabla q)_{\T_h}+\l\bu\cdot\bn,q\r_{\partial\T_h}\\
&=&-(\bQ_h\bu,\;\nabla q)_{\T_h}+\l\bu\cdot\bn,q\r_{\partial\T_h}\\
&=&(\nabla_w\cdot\bQ_h\bu,\;q)_{\T_h}+\l\{\bu-\bQ_h\bu\}\cdot\bn,\;q\r_{\partial\T_h}\\
&=&(\nabla_w\cdot\bQ_h\bu,\;q)_{\T_h}+\ell_4(\bu,q).
\end{eqnarray*}
which implies
\begin{eqnarray}
(\nabla_w\cdot\bQ_h\bu,\;q)_{\T_h}=-\ell_4(\bu,q).\label{mm5}
\end{eqnarray}
The difference of (\ref{mm5}) and (\ref{wg2}) implies (\ref{ee2}). We have proved the lemma.

\end{proof}

\section{Error Estimates in Energy Norm}
In this section, we shall establish optimal order error estimates
for the velocity approximation $\bu_h$ in $\3bar\cdot\3bar$ norm  and for the pressure approximation $p_h$ in
the standard $L^2$ norm. 

It is easy to see that the following equations hold true for $\{\bv\}$ defined in (\ref{avg}),
\begin{equation}\label{jp}
\|\bv-\{\bv\}\|_e=\|[\bv]\|_e\quad {\rm if} \;e\subset \partial\Omega,\quad\|\bv-\{\bv\}\|_e=\frac12\|[\bv]\|_e\;\; {\rm if} \;e\in\E_h^0.
\end{equation}

\begin{lemma}
Let $(\bw,\rho)\in \bH^{k+1}(\Omega)\times H^k(\Omega)$ and
$(\bv,q)\in V_h\times W_h$. Assume that the finite element partition $\T_h$ is
shape regular. Then, the following estimates hold true
\begin{eqnarray}
|\ell_1(\bw,\ \bv)|&\le& Ch^{k}|\bw|_{k+1}\3bar \bv\3bar,\label{mmm2}\\
|\ell_2(\bw,\ \bv)|&\le& Ch^{k}|\bw|_{k+1}\3bar \bv\3bar,\label{mmm3}\\
|\ell_3(\rho,\ \bv)|&\le& Ch^{k}|\rho|_{k}\3bar \bv\3bar.\label{mmm4}\\
|\ell_4(\bw,q)&\le& Ch^{k}|\bw|_{k+1}\|q\|.\label{mmm5}
\end{eqnarray}
\end{lemma}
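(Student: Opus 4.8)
The plan is to bound each of the four functionals $\ell_i$ by the same three ingredients: (i) the approximation properties of the $L^2$ projections $\Q_h$, $\bQ_h$, $Q_h$ on a shape-regular mesh; (ii) the trace inequality \eqref{trace}; and (iii) the Cauchy--Schwarz inequality, followed by conversion of $\|\cdot\|_{1,h}$ into $\3bar\cdot\3bar$ via the norm equivalence \eqref{happy}. Every $\ell_i$ is a boundary integral over $\partial\T_h$ whose first factor is a projection error of $\bw$ or $\rho$ and whose second factor is either a jump of $\bv$ (for $\ell_1,\ell_3,\ell_4$) or, for $\ell_2$, the weak gradient of $\bv$. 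So the recurring estimate I would prepare first is: for a smooth function $\psi$, the $L^2(e)$-norm of $\psi-\text{(its projection)}$ on a face $e\subset\partial T$ is $O(h_T^{k+1/2}|\psi|_{k+1,T})$, obtained by applying \eqref{trace} to the projection error on $T$ and invoking the standard estimate $\|\psi-\Pi\psi\|_{m,T}\le C h_T^{k+1-m}|\psi|_{k+1,T}$ for $m=0,1$.

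For $\ell_1(\bw,\bv)=\l(\nabla\bw-\Q_h\nabla\bw)\cdot\bn,\bv-\{\bv\}\r_{\partial\T_h}$, I would apply Cauchy--Schwarz face by face, bounding $\|(\nabla\bw-\Q_h\nabla\bw)\cdot\bn\|_e\le Ch_T^{k-1/2}|\bw|_{k+1,T}$ (here the projected quantity is $\nabla\bw\in\bH^k$, so the projection error is one order lower, giving $h^{k}$ overall after the $h^{-1/2}$ from the jump term) and $\|\bv-\{\bv\}\|_e=\tfrac12\|[\bv]\|_e$ by \eqref{jp}. Summing over faces, applying discrete Cauchy--Schwarz, and recognizing $\sum_e h_e^{-1}\|[\bv]\|_e^2\le\|\bv\|_{1,h}^2$ yields $Ch^k|\bw|_{k+1}\|\bv\|_{1,h}$, and \eqref{happy} replaces $\|\bv\|_{1,h}$ by $\3bar\bv\3bar$. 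The functional $\ell_3(\rho,\bv)=\l\rho-Q_h\rho,(\bv-\{\bv\})\cdot\bn\r_{\partial\T_h}$ is handled identically, with $\|\rho-Q_h\rho\|_e\le Ch_T^{k-1/2}|\rho|_{k,T}$ (now $\rho\in H^k$, so the scalar projection loses one order and produces $h^{k-1/2}$), giving \eqref{mmm4}. The functional $\ell_4(\bw,q)=\l\{\bu-\bQ_h\bu\}\cdot\bn,q\r$ follows the same template, pairing $\|\bw-\bQ_h\bw\|_e\le Ch_T^{k+1/2}|\bw|_{k+1,T}$ against $\|q\|_e\le Ch_T^{-1/2}\|q\|_T$ (the inverse-trace estimate on the polynomial $q$), summing to $Ch^k|\bw|_{k+1}\|q\|$.

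The one functional that departs from the boundary-integral pattern is $\ell_2(\bw,\bv)=(\nabla_w(\bw-\bQ_h\bw),\nabla_w\bv)_{\T_h}$. Here I would first apply Cauchy--Schwarz in the $\3bar\cdot\3bar$ pairing to get $|\ell_2(\bw,\bv)|\le\3bar\bw-\bQ_h\bw\3bar\,\3bar\bv\3bar$, so the task reduces to showing $\3bar\bw-\bQ_h\bw\3bar\le Ch^k|\bw|_{k+1}$. Using the definition \eqref{wg} of the weak gradient applied to $\bw-\bQ_h\bw$ and testing against $\tau\in[P_j(T)]^{d\times d}$, the volume term contributes the $L^2$-projection error of $\bw$ and the boundary term contributes a trace-type error; bounding both by the approximation estimates and \eqref{trace}, then taking $\tau=\nabla_w(\bw-\bQ_h\bw)$, gives the needed bound on $\3bar\bw-\bQ_h\bw\3bar$. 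I expect this estimate of the weak gradient of the projection error to be the main obstacle, because one must carefully track that choosing $j=n+k-1$ in \eqref{wg} still leaves the approximation order intact and that the boundary contribution, scaled by the trace inequality, does not degrade below $h^k$; the other three bounds are essentially mechanical once the face-wise projection-error lemma is in hand.
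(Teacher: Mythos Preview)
Your proposal is correct and follows essentially the same approach as the paper: Cauchy--Schwarz on faces, the trace inequality \eqref{trace}, projection error estimates, and the norm equivalence \eqref{happy} for $\ell_1,\ell_3,\ell_4$, and for $\ell_2$ the weak-gradient definition tested against a polynomial $\tau$ to obtain the elementwise bound $|(\nabla_w(\bw-\bQ_h\bw),\tau)_T|\le Ch^k|\bw|_{k+1,T}\|\tau\|_T$. The only cosmetic difference is that the paper sets $\tau=\nabla_w\bv$ directly (and integrates by parts once more so the volume term becomes $(\nabla(\bw-\bQ_h\bw),\tau)_T$, avoiding an inverse inequality), whereas you first pass through $\3bar\bw-\bQ_h\bw\3bar$ via Cauchy--Schwarz; the underlying estimate is the same.
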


\begin{proof}
Using the Cauchy-Schwarz inequality, the trace inequality (\ref{trace}), (\ref{jp}) and  (\ref{happy}), we have
\begin{eqnarray*}
|\ell_1(\bw,\ \bv)|&=&|\sum_{T\in\T_h}\l\bv-\{\bv\},\ \nabla\bw\cdot\bn-\Q_h(\nabla\bw)\cdot\bn\r_\pT|\\
&\le & C \sum_{T\in\T_h}\|\nabla \bw-\Q_h\nabla \bw\|_{\pT}
\|\bv-\{\bv\}\|_\pT\nonumber\\
&\le & C \left(\sum_{T\in\T_h}h_T\|(\nabla \bw-\Q_h\nabla \bw)\|_{\pT}^2\right)^{\frac12}
\left(\sum_{e\in\E_h}h_e^{-1}\|[\bv]\|_e^2\right)^{\frac12}\\
&\le & Ch^{k}|\bw|_{k+1}\3bar \bv\3bar,
\end{eqnarray*}
It follows from (\ref{wg}), integration  by parts, (\ref{trace}) and (\ref{jp}) that for any $\bq\in [P_{j}(T)]^{d\times d}$,
\begin{eqnarray}
|(\nabla_w(\bw-\bQ_h\bw), \bq)_{T}|&=&|-(\bw-\bQ_h\bw, \nabla\cdot\bq)_{T}+\l \bw-\{\bQ_h\bw\}, \bq\cdot\bn\r_{\pT}|\nonumber\\
&=&|(\nabla (\bw-\bQ_h\bw), \bq)_{T}+\l \bQ_h\bw-\{\bQ_h\bw\}, \bq\cdot\bn\r_{\pT}|\nonumber\\
&\le& \|\nabla (\bw-\bQ_h\bw)\|_T\|\bq\|_T+Ch^{-1/2}\|[\bQ_h\bw]\|_\pT\|\bq\|_T\nonumber\\
&\le& \|\nabla (\bw-\bQ_h\bw)\|_T\|\bq\|_T+Ch^{-1/2}\|[\bw-\bQ_h\bw]\|_\pT\|\bq\|_T\nonumber\\
&\le& Ch^k|\bw|_{k+1, T}\|\bq\|_T.\label{m30}
\end{eqnarray}
Letting $\bq=\nabla_w\bv$ in (\ref{m30}) and taking summation over $T$, we have
\begin{eqnarray*}
|\ell_2(\bw,\ \bv)|&=&|(\nabla_w (\bw-\bQ_h\bw),\nabla_w\bv)_{\T_h}|\\
&\le& Ch^{k}|\bw|_{k+1}\3bar \bv\3bar.
\end{eqnarray*}
It follows from the definition of $Q_h$, (\ref{trace}), (\ref{jp}) and (\ref{happy}) that
\begin{eqnarray*}
|\ell_3(\rho,\ \bv)|&=&|\sum_{T\in\T_h}\l \rho-Q_h\rho, \bv\cdot\bn-\{\bv\}\cdot\bn\r_\pT|\\
&\le & C \sum_{T\in\T_h}\|\rho-Q_h\rho\|_{\pT}
\|\bv-\{\bv\}\|_\pT\nonumber\\
&\le & C \left(\sum_{T\in\T_h}h_T\|\rho-Q_h\rho\|_{\pT}^2\right)^{\frac12}
\left(\sum_{e\in\E_h}h_e^{-1}\|[\bv]\|_e^2\right)^{\frac12}\\
&\le & Ch^{k}|\rho|_{k+1}\3bar \bv\3bar.
\end{eqnarray*}
Similarly we have
\begin{eqnarray*}
|\ell_4(\bw,q)|&=&|\sum_{T\in\T_h}\l\{\bw-\bQ_h\bw\}\cdot\bn,q\r_\pT|\\
&\le & C \sum_{T\in\T_h}\|\bw-\bQ_h\bw\|_{\pT}
\|q\|_\pT\nonumber\\
&\le & C \left(\sum_{T\in\T_h}h_T^{-1}\|\bw-\bQ_h\bw\|_{\pT}^2\right)^{\frac12}
\left(\sum_{T\in\T_h}h_T\|q\|_e^2\right)^{\frac12}\\
&\le & Ch^{k}|\bw|_{k+1}\|q\|.
\end{eqnarray*}
We have proved the lemma.
\end{proof}

\medskip

\begin{theorem}\label{h1-bd}
Let  $(\bu_h,p_h)\in
V_h\times W_h$ be the solution of 
(\ref{wg1})-(\ref{wg2}). Then, the following error
estimates hold true
\begin{eqnarray}
\3bar \bQ_h\bu-\bu_h\3bar &\le& Ch^{k}(|\bu|_{k+1}+|p|_k),\label{errv}\\
\|Q_hp-p_h\|&\le& Ch^{k}(|\bu|_{k+1}+|p|_k).\label{errp}
\end{eqnarray}
\end{theorem}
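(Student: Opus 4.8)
The plan is to exploit the two error equations (\ref{ee1})--(\ref{ee2}) together with the functional bounds (\ref{mmm2})--(\ref{mmm5}) and the inf-sup condition (\ref{inf-sup}), closing a coupled system between the velocity error $\3bar\be_h\3bar$ and the pressure error $\|\varepsilon_h\|$ by a Young-inequality argument. Here I write $\varepsilon_h=Q_hp-p_h$ and $\be_h=\bQ_h\bu-\bu_h$ as in the error lemma.

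First I would derive an energy identity for the velocity error. Taking $\bv=\be_h$ in (\ref{ee1}) and $q=\varepsilon_h$ in (\ref{ee2}), the latter gives $(\varepsilon_h,\nabla_w\cdot\be_h)=-\ell_4(\bu,\varepsilon_h)$, so the cross term in (\ref{ee1}) is eliminated, yielding
\[
\3bar\be_h\3bar^2=\ell_1(\bu,\be_h)-\ell_2(\bu,\be_h)-\ell_3(p,\be_h)-\ell_4(\bu,\varepsilon_h).
\]
Applying (\ref{mmm2})--(\ref{mmm5}) to the right-hand side then produces the coupled estimate
\[
\3bar\be_h\3bar^2\le Ch^k(|\bu|_{k+1}+|p|_k)\3bar\be_h\3bar+Ch^k|\bu|_{k+1}\|\varepsilon_h\|,
\]
so the velocity bound cannot be obtained in isolation; it is tied to $\|\varepsilon_h\|$ through the nonvanishing divergence-consistency term $\ell_4$.

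To break the coupling I would control $\|\varepsilon_h\|$ through the inf-sup condition. For arbitrary $\bv\in V_h$, equation (\ref{ee1}) rearranges to $(\nabla_w\cdot\bv,\varepsilon_h)=(\nabla_w\be_h,\nabla_w\bv)_{\T_h}-\ell_1(\bu,\bv)+\ell_2(\bu,\bv)+\ell_3(p,\bv)$; bounding the right-hand side by Cauchy--Schwarz and (\ref{mmm2})--(\ref{mmm4}) gives $|(\nabla_w\cdot\bv,\varepsilon_h)|\le\big(\3bar\be_h\3bar+Ch^k(|\bu|_{k+1}+|p|_k)\big)\3bar\bv\3bar$. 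Dividing by $\3bar\bv\3bar$, taking the supremum over $\bv\in V_h$, and invoking (\ref{inf-sup}) yields
\[
\|\varepsilon_h\|\le C\big(\3bar\be_h\3bar+h^k(|\bu|_{k+1}+|p|_k)\big).
\]

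Finally I would substitute this pressure bound into the coupled velocity inequality. Writing $A=\3bar\be_h\3bar$ and $B=h^k(|\bu|_{k+1}+|p|_k)$, the two displays combine to $A^2\le C(AB+B^2)$; absorbing $CAB\le\tfrac12 A^2+C'B^2$ by Young's inequality leaves $A\le CB$, which is exactly (\ref{errv}). Feeding this back into the $\|\varepsilon_h\|$ bound immediately gives (\ref{errp}). The main obstacle is the saddle-point coupling caused by $\ell_4(\bu,\varepsilon_h)$: because the weak divergence of $\bQ_h\bu$ does not vanish against $W_h$, the velocity error cannot be estimated by itself, and one must run the inf-sup estimate and the energy estimate simultaneously, using Young's inequality to close the loop.
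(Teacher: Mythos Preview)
Your proposal is correct and follows essentially the same approach as the paper: derive an energy identity by testing (\ref{ee1}) with $\bv=\be_h$ and (\ref{ee2}) with $q=\varepsilon_h$, bound $\|\varepsilon_h\|$ via the inf-sup condition (\ref{inf-sup}) applied to (\ref{ee1}), and then close the loop with Young's inequality. The only differences are cosmetic --- the paper establishes the inf-sup pressure bound first and then the energy identity, and in its intermediate bound (\ref{e11}) it drops the $|p|_k$ contribution from $\ell_3$, whereas you (more carefully) retain it; neither affects the final result.
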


\smallskip

\begin{proof}
It follows from (\ref{ee1}) that for any $\bv\in V_h$
\begin{eqnarray}
|(\varepsilon_h,\;\nabla_w\cdot\bv)_{\T_h}|&=&|(\nabla_w\be_h,\; \nabla_w\bv)_{\T_h}-\ell_1(\bu,\bv)+\ell_2(\bu,\bv)+\ell_3(p,\bv)|\nonumber\\
&\le& C(\3bar\be_h\3bar +h^k|\bu|_{k+1})\3bar\bv\3bar.\label{e10}
\end{eqnarray}
Then the  estimate (\ref{e10}) and (\ref{inf-sup}) yield
\begin{eqnarray}
\|\varepsilon_h\|&\le& C(\3bar\be_h\3bar +h^k|\bu|_{k+1}).\label{e11}
\end{eqnarray}

By letting $\bv=\be_h$ in (\ref{ee1}) and $q=\varepsilon_h$ in
(\ref{ee2}) and adding the two resulting equations, we have
\begin{eqnarray}
\3bar \be_h\3bar^2&=&|\ell_1(\bu,\be_h)-\ell_2(\bu,\be_h)-\ell_3(p,\be_h)+\ell_4(\bu,\varepsilon_h)|.\label{main}
\end{eqnarray}
It then follows from (\ref{mmm2})-(\ref{mmm5}) and (\ref{e11}) that
\begin{eqnarray}
\3bar \be_h\3bar^2 &\le& Ch^{k}(|\bu|_{k+1}+|p|_k)\3bar \be_h\3bar+Ch^k|\bu|_{k+1}\|\varepsilon_h\|\nonumber\\
&\le& Ch^{k}(|\bu|_{k+1}+|p|_k)\3bar \be_h\3bar+Ch^k|\bu|_{k+1}(\3bar\be_h\3bar+Ch^k|\bu|_{k+1})\nonumber\\
&\le& Ch^{2k}(|\bu|^2_{k+1}+|p|^2_k)+\frac12\3bar\be_h\3bar^2, \label{b-u}
\end{eqnarray}
which implies (\ref{errv}). The pressure error estimate (\ref{errp}) follows immediately from (\ref{e11}) and (\ref{errv}).
\end{proof}

\section{Error Estimates in $L^2$ Norm}

In this section, we shall derive an $L^2$-error estimate
for the velocity approximation through a duality argument. Recall that $\be_h=\bQ_h\bu-\bu_h$ and $\bepsilon_h=\bu-\bu_h$. To this
end, consider the problem of seeking $(\bpsi,\xi)$ such that
\begin{eqnarray}
-\Delta\bpsi+\nabla \xi&=\bepsilon_h &\quad \mbox{in}\;\Omega,\label{dual-m}\\
\nabla\cdot\bpsi&=0 &\quad\mbox{in}\;\Omega,\label{dual-c}\\
\bpsi&= 0 &\quad\mbox{on}\;\partial\Omega.\label{dual-bc}
\end{eqnarray}
Assume that the dual problem has the $\bH^{2}(\Omega)\times
H^1(\Omega)$-regularity property in the sense that the solution
$(\bpsi,\xi)\in \bH^{2}(\Omega)\times H^1(\Omega)$ and the
following a priori estimate holds true:
\begin{equation}\label{reg}
\|\bpsi\|_{2}+\|\xi\|_1\le C\|\bepsilon_h\|.
\end{equation}

\medskip
\begin{theorem}
Let  $(\bu_h,p_h)\in V_h\times W_h$ be the solution of
(\ref{wg1})-(\ref{wg2}). Assume that (\ref{reg}) holds true.  Then we have
\begin{equation}\label{l2err}
\|\bu-\bu_h\|\le Ch^{k+1}(|\bu|_{k+1}+|p|_{k}).
\end{equation}
\end{theorem}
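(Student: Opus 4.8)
The plan is to establish the $L^2$ velocity estimate \eqref{l2err} through a standard duality (Aubin--Nitsche) argument, using the dual problem \eqref{dual-m}--\eqref{dual-bc} together with its regularity \eqref{reg}. First I would test the dual momentum equation \eqref{dual-m} against $\bepsilon_h = \bu - \bu_h$, writing $\|\bepsilon_h\|^2 = (-\Delta\bpsi + \nabla\xi, \bepsilon_h)$. Since $\bepsilon_h \notin \bH_0^1$, I cannot integrate by parts directly, so the key move is to replace $\bepsilon_h$ by its projection-based counterpart: I would split $\bepsilon_h = (\bu - \bQ_h\bu) + (\bQ_h\bu - \bu_h) = (\bu - \bQ_h\bu) + \be_h$ and handle the two pieces separately. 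The term involving $\bu - \bQ_h\bu$ is controlled by approximation estimates and the orthogonality of the $L^2$ projection $\bQ_h$, while the term involving $\be_h$ is where the weak-gradient machinery enters.

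The central step is to derive an identity analogous to \eqref{mm2}--\eqref{mm4}, but now with $(\bpsi,\xi)$ playing the role of the exact solution and $\be_h$ the role of the test function. Applying the same computations that produced \eqref{mm2} and \eqref{mm3} (integration by parts, the commuting property \eqref{key}, and the weak-divergence definition \eqref{wd}), I expect to obtain an expression of the form
\begin{equation*}
(\bepsilon_h, \be_h)_{\T_h} = (\nabla_w\bpsi, \nabla_w\be_h)_{\T_h} - (Q_h\xi, \nabla_w\cdot\be_h)_{\T_h} - \ell_1(\bpsi,\be_h) + \ell_3(\xi,\be_h),
\end{equation*}
so that the left side $\|\bepsilon_h\|^2$ can be rewritten by combining this with the error equations \eqref{ee1}--\eqref{ee2}. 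The idea is then to use \eqref{ee1} with $\bv = \bQ_h\bpsi$ and \eqref{ee2} with $q = Q_h\xi$ to cancel the principal bilinear terms $(\nabla_w\bpsi, \nabla_w\be_h)$ and $(Q_h\xi, \nabla_w\cdot\be_h)$ against $(\nabla_w\be_h, \nabla_w\bQ_h\bpsi)$ and $(\varepsilon_h, \nabla_w\cdot\bQ_h\bpsi)$, leaving only a sum of $\ell_i$-type functionals evaluated at the dual data and at $\be_h$ or $\varepsilon_h$.

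After this cancellation I would be left with $\|\bepsilon_h\|^2$ bounded by a collection of terms of the shape $\ell_1(\bu,\bQ_h\bpsi)$, $\ell_3(p,\bQ_h\bpsi)$, $\ell_1(\bpsi,\be_h)$, $\ell_3(\xi,\be_h)$, $\ell_2(\bpsi,\be_h)$, and $\ell_4(\bpsi,\varepsilon_h)$, plus the $(\bu-\bQ_h\bu,\bepsilon_h)$ contribution. Each of these I would estimate by the bounds \eqref{mmm2}--\eqref{mmm5} (or their obvious analogues with $k$ replaced by $1$, since $(\bpsi,\xi)$ has only $\bH^2\times H^1$ regularity), then invoke the energy estimates \eqref{errv}--\eqref{errp} for $\3bar\be_h\3bar$ and $\|\varepsilon_h\|$, and finally the regularity bound \eqref{reg} to convert $\|\bpsi\|_2 + \|\xi\|_1$ into $\|\bepsilon_h\|$. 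The expected outcome is $\|\bepsilon_h\|^2 \le C h^{k+1}(|\bu|_{k+1}+|p|_k)\|\bepsilon_h\|$, which gives \eqref{l2err} after dividing.

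The main obstacle I anticipate is the careful bookkeeping of the boundary functionals when the \emph{dual} solution $\bpsi$ is the smooth function and $\be_h$ (or the test projection $\bQ_h\bpsi$) is the discrete one: the roles of the two arguments in $\ell_1$, $\ell_3$, and $\ell_4$ are not symmetric, so the trace/projection estimates must be applied with the regularity attached to the correct argument. In particular, matching each term against the right one power of $h$ so that the dual contributions supply the extra factor $h$ beyond the energy-norm rate $h^k$ is the delicate point; getting the gain of exactly one order (and no more) hinges on exploiting the $L^2$-orthogonality of $\bQ_h$ and the weak-Galerkin cancellations precisely, rather than on the raw trace inequality \eqref{trace} alone.
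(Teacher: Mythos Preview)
Your overall strategy is the same duality (Aubin--Nitsche) argument the paper uses, and it would succeed, but there is a concrete bookkeeping gap. When you invoke \eqref{ee1} with $\bv=\bQ_h\bpsi$, the right-hand side produces an additional term $\ell_2(\bu,\bQ_h\bpsi)$, and \eqref{ee2} with $q=Q_h\xi$ produces $\ell_4(\bu,Q_h\xi)$; neither appears in your list of residuals. These are not harmless: the direct bounds \eqref{mmm3} and \eqref{mmm5} give only
\[
|\ell_2(\bu,\bQ_h\bpsi)|\le Ch^k|\bu|_{k+1}\,\3bar\bQ_h\bpsi\3bar\le Ch^k|\bu|_{k+1}\|\bpsi\|_1,
\qquad
|\ell_4(\bu,Q_h\xi)|\le Ch^k|\bu|_{k+1}\|\xi\|,
\]
one order short of what is needed. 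Both terms \emph{can} be sharpened to $O(h^{k+1})$: for $\ell_4(\bu,Q_h\xi)$ one rewrites the edge sum using the jump $[Q_h\xi-\xi]$, which contributes an extra factor $h^{1/2}\|\xi\|_1$; for $\ell_2(\bu,\bQ_h\bpsi)$ one splits off $\nabla_w(\bpsi-\bQ_h\bpsi)$ and treats the remaining piece $(\nabla_w(\bu-\bQ_h\bu),\Q_h\nabla\bpsi)$ via the definition of $\nabla_w$ together with the continuity of $\nabla\bpsi$ across edges. But this extra analysis is precisely what your plan (``estimate by \eqref{mmm2}--\eqref{mmm5} or their analogues with $k$ replaced by $1$'') understates.

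The paper sidesteps these two terms by a slightly different route: it tests \eqref{dual-m} against $\bepsilon_h$ directly, without splitting into $\bu-\bQ_h\bu$ and $\be_h$, and it uses the error equation \eqref{mm10} for $\bepsilon_h$, which carries \emph{no} $\ell_2$ term; for the $\nabla\xi$ contribution it invokes \eqref{wg2} rather than \eqref{ee2}, so $\ell_4(\bu,\cdot)$ never appears. The price is different extra residuals, namely $(\nabla\bpsi-\Q_h\nabla\bpsi,\nabla\bepsilon_h)_{\T_h}$ and $(\nabla_w(\bpsi-\bQ_h\bpsi),\nabla_w\bepsilon_h)_{\T_h}$, both of which are straightforwardly $O(h^{k+1})$ from approximation properties and \eqref{errv}. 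So your route is viable but not cleaner; the paper's choice of \eqref{mm10} over \eqref{ee1} is what keeps the estimates elementary.
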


\begin{proof}
Testing (\ref{dual-m}) by $\bepsilon_h$   gives
\begin{eqnarray}
(\bepsilon_h, \bepsilon_h)&=&-(\Delta\bpsi,\;\bepsilon_h)+(\nabla \xi,\ \bepsilon_h).\label{m20}
\end{eqnarray}

Using integration by parts and the fact $\l \nabla\bpsi\cdot\bn,\{\bepsilon_h\}\r_{\partial\T_h}=0$, then
\begin{eqnarray*}
-(\Delta\bpsi,\bepsilon_h)
&=&(\nabla \bpsi,\ \nabla\bepsilon_h)_{\T_h}-\l
\nabla\bpsi\cdot\bn,\ \bepsilon_h- \{\bepsilon_h\}\r_{\partial\T_h}\\
&=&(\Q_h\nabla \bpsi,\ \nabla\bepsilon_h)_{\T_h}+(\nabla\bpsi-\Q_h\nabla \bpsi,\ \nabla\bepsilon_h)_{\T_h}\\
& & -\l
\nabla\bpsi\cdot\bn,\ \bepsilon_h- \{\bepsilon_h\}\r_{\partial\T_h}\\
&=&-(\nabla\cdot\Q_h\nabla \bpsi,\ \bepsilon_h)_{\T_h}+\l \Q_h\nabla\bpsi\cdot\bn,\ \bepsilon_h\r_{\partial\T_h}\\
& &+(\nabla\bpsi-\Q_h\nabla \bpsi,\ \nabla\bepsilon_h)_{\T_h}-\l\nabla\bpsi\cdot\bn,\ \bepsilon_h- \{\bepsilon_h\}\r_{\partial\T_h}\\
&=&(\Q_h\nabla \bpsi,\ \nabla_w\bepsilon_h)_{\T_h}+\l\Q_h\nabla\bpsi\cdot\bn,\ \bepsilon_h-\{\bepsilon_h\}\r_{\partial\T_h}\\
& &+(\nabla\bpsi-\Q_h\nabla \bpsi,\ \nabla\bepsilon_h)_{\T_h}-\l\nabla\bpsi\cdot\bn,\ \bepsilon_h- \{\bepsilon_h\}\r_{\partial\T_h}\\
&=&(\Q_h\nabla \bpsi,\ \nabla_w\bepsilon_h)_{\T_h}+(\nabla\bpsi-\Q_h\nabla \bpsi,\ \nabla\bepsilon_h)_{\T_h}-\ell_1(\bpsi,\bepsilon_h).
\end{eqnarray*}
It follows from (\ref{key}) that
\begin{eqnarray*}
(\Q_h\nabla \bpsi,\ \nabla_w\bpsi_h)_{\T_h}&=&(\nabla_w \bpsi,\;\nabla_w\bepsilon_h)_{\T_h}\\
&=&(\nabla_w \bQ_h\bpsi,\;\nabla_w\bepsilon_h)_{\T_h}+(\nabla_w (\bpsi-\bQ_h\bpsi),\;\nabla_w\bepsilon_h)_{\T_h}.
\end{eqnarray*}
The equation (\ref{mm5}) implies
\begin{equation}\label{mmm44}
(\varepsilon_h,\nabla_w\cdot\bQ_h\bpsi)_{\T_h}=-\ell_4(\bpsi,\varepsilon_h).
\end{equation}
Using the equation (\ref{mm10}) and (\ref{mmm44}), we have
\begin{eqnarray*}
(\nabla_w \bQ_h\bpsi,\;\nabla_w\bepsilon_h)_{\T_h}=\ell_1(\bu,\bQ_h\bpsi)-\ell_3(p,\bQ_h\bpsi)-\ell_4(\bpsi,\varepsilon_h).\label{m42}
\end{eqnarray*}
Combining the three equations above imply that
\begin{eqnarray}
-(\Delta\bpsi,\bepsilon_h)&=&\ell_1(\bu,\bQ_h\bpsi)-\ell_3(p,\bQ_h\bpsi)-\ell_4(\bpsi,\varepsilon_h)+(\nabla_w (\bpsi-\bQ_h\bpsi),\;\nabla_w\bepsilon_h)_{\T_h}\nonumber\\
&+&(\nabla\bpsi-\bQ_h\nabla \bpsi,\ \nabla\bepsilon_h)_{\T_h}-\ell_1(\bpsi,\bepsilon_h).\label{m40}
\end{eqnarray}

It follows from integration by parts and (\ref{cont}), (\ref{wd}) and (\ref{wg2}) that
\begin{eqnarray}
(\nabla \xi,\ \bepsilon_h)&=&(\nabla \xi,\ \bu)-(\nabla \xi,\ \bu_h)=-(\nabla \xi,\ \bu_h)\nonumber\\
&=&(Q_h\xi,\ \nabla\cdot\bu_h)_{\T_h}-\l \xi,\; \bu_h\cdot\bn-\{\bu_h\}\cdot\bn\r_{\partial\T_h}\nonumber\\
&=&-(\nabla Q_h\xi,\ \bu_h)_{\T_h}+\l Q_h\xi,\; \bu_h\cdot\bn\r_{\partial\T_h}
  \nonumber \\ & & -\l \xi,\; \bu_h\cdot\bn-\{\bu_h\}\cdot\bn\r_{\partial\T_h}\nonumber\\
&=&(Q_h\xi,\ \nabla_w\cdot\bu_h)_{\T_h}+\l Q_h\xi,\; \bu_h\cdot\bn-\{\bu_h\}\cdot\bn\r_{\partial\T_h}
 \nonumber \\ & & -\l \xi,\; \bu_h\cdot\bn-\{\bu_h\}\cdot\bn\r_{\partial\T_h}\nonumber\\
&=&-\l \xi-Q_h\xi,\; (\bu_h-\{\bu_h\})\cdot\bn\r_{\partial\T_h}\nonumber\\
&=&-\ell_3(\xi,\bu_h)=\ell_3(\xi,\bepsilon_h).\label{m41}
\end{eqnarray}
Combining (\ref{m20})-(\ref{m42}), we have
\begin{eqnarray}
\|\bepsilon_h\|^2&=&\ell_1(\bu,\bQ_h\bpsi)-\ell_3(p,\bQ_h\bpsi_h)-\ell_4(\bpsi,\varepsilon_h)+(\nabla_w(\bpsi-\bQ_h\bpsi),\;\nabla_w\bepsilon_h)\nonumber\\
&+&(\nabla\bpsi-\Q_h\nabla \bpsi,\ \nabla\bepsilon_h)_{\T_h}-\ell_1(\bpsi, \bepsilon_h)+\ell_3(\xi,\bepsilon_h).\label{mmm22}
\end{eqnarray}

Next we will estimate all the terms on the right hand side of (\ref{mmm22}). Using the Cauchy-Schwarz inequality, the trace inequality (\ref{trace}) and the definitions of $\bQ_h$ and $\Q_h$
we obtain
\begin{eqnarray*}
|\ell_1(\bu,\bQ_h\bpsi)|&\le&\left| \langle (\nabla \bu-\Q_h\nabla
\bu)\cdot\bn,\;
\bQ_h\bpsi-\{\bQ_h\bpsi\}\rangle_{\partial\T_h} \right|\\
&\le& \left(\sum_{T\in\T_h}\|(\nabla \bu-\Q_h\nabla \bu)\|^2_\pT\right)^{1/2}
\left(\sum_{T\in\T_h}\|\bQ_h\bpsi-\{\bQ_h\bpsi\}\|^2_\pT\right)^{1/2}\nonumber \\
&\le& C\left(\sum_{T\in\T_h}h_T\|(\nabla \bu-\Q_h\nabla \bu)\|^2_\pT\right)^{1/2}
\left(\sum_{T\in\T_h}h_T^{-1}\|[\bQ_h\bpsi-\bpsi]\|^2_\pT\right)^{1/2} \nonumber\\
&\le&  Ch^{k+1}|\bu|_{k+1}\|\bpsi\|_2.\nonumber
\end{eqnarray*}
Similarly, we have
\begin{eqnarray*}
|\ell_3(p,\bQ_h\bpsi)|&\le&\left| \langle p-Q_hp,\;
(\bQ_h\bpsi-\{\bQ_h\bpsi\})\cdot\bn\rangle_{\partial\T_h} \right|\\
&\le& \left(\sum_{T\in\T_h}h_T\|p-Q_hp\|^2_\pT\right)^{1/2}
\left(\sum_{T\in\T_h}h_T^{-1}\|\bQ_h\bpsi-\{\bQ_h\bpsi\}\|^2_\pT\right)^{1/2}\nonumber \\
&\le&  Ch^{k+1}|p|_{k}\|\bpsi\|_2.\nonumber
\end{eqnarray*}

It follows from (\ref{m30}) and (\ref{errv}) that
\begin{eqnarray*}
|(\nabla_w(\bpsi-\bQ_h\bpsi),\;\nabla_w \bepsilon_h)_{\T_h}|&\le& C\3bar \bepsilon_h\3bar \3bar \bpsi-\bQ_h\bpsi\3bar\\
&\le& Ch^{k+1}(|\bu|_{k+1}+|p|_k)|\bpsi|_2.
\end{eqnarray*}
The estimates (\ref{happy}) and (\ref{errv}) imply
\begin{eqnarray*}
|(\nabla\bpsi-\Q_h\nabla \bpsi,\ \nabla \bepsilon_h)_{\T_h}|&\le& C(\sum_{T\in\T_h}\|\nabla \bepsilon_h\|_T^2)^{1/2} (\sum_{T\in\T_h}\|\nabla\bpsi-\Q_h\nabla \bpsi\|_T^2)^{1/2}\\
&\le& C(\sum_{T\in\T_h}(\|\nabla(\bu-\bQ_h\bu)\|_T^2+\|\nabla(\bQ_h\bu-\bu_h)\|_T^2))^{1/2}\\
&\times &(\sum_{T\in\T_h}\|\nabla\bpsi-\Q_h\nabla \bpsi\|_T^2)^{1/2}\\
&\le& Ch|\bpsi|_2(h^k|\bu|_{k+1}+\3bar \be_h\3bar)\\
&\le& Ch^{k+1}(|\bu|_{k+1}+|p|_k)|\bpsi|_2.
\end{eqnarray*}
Using (\ref{happy}) and (\ref{errv}), we obtain
\begin{eqnarray*}
|\ell_1(\bpsi,\bepsilon_h)|&=&\left|\l (\nabla \bpsi-\Q_h\nabla \bpsi)\cdot\bn,\
\bepsilon_h-\{\bepsilon_h\}\r_{\partial\T_h}\right|\\
&\le&\sum_{T\in\T_h} h_T^{1/2}\|\nabla \bpsi-\Q_h\nabla \bpsi\|_\pT  h_T^{-1/2}\|[\bepsilon_h]\|_\pT\\
&\le&Ch\|\bpsi\|_2(\sum_{T\in\T_h}h_T^{-1}(\|[\be_h]\|^2_\pT+\|[\bu-\bQ_h\bu]\|^2_\pT))^{1/2}\\
&\le&Ch\|\bpsi\|_2(\3bar\be_h\3bar+(\sum_{T\in\T_h}h_T^{-1}\|[\bu-\bQ_h\bu]\|^2_\pT)^{1/2}\\
&\le&  Ch^{k+1 }(|\bu|_{k+1}+|p|_k)\|\bpsi\|_2.
\end{eqnarray*}
Similarly, we have
\begin{eqnarray*}
|\ell_3(\xi,\bepsilon_h)|&=&\left|\l \xi-Q_h\xi,\
(\bepsilon_h-\{\bepsilon_h\})\cdot\bn\r_{\partial\T_h}\right|\\
&\le&\sum_{T\in\T_h} h_T^{1/2}\|\xi-Q_h\xi\|_\pT  h_T^{-1/2}\|[\bepsilon_h]\|_\pT\\
&\le&  Ch^{k+1}(|\bu|_{k+1}+|p|_{k})\|\xi\|_1.
\end{eqnarray*}
Using (\ref{mmm5}) and (\ref{errp}), we have
\[
\ell_4(\bpsi,\varepsilon_h)\le Ch^{k+1}(|\bu|_{k+1}+|p|_{k})\|\bpsi\|_2.
\]
Combining all the estimates above
with (\ref{mmm22}) yields
$$
\|\bepsilon_h\|^2 \leq C h^{k+1}(|\bu|_{k+1}+|p|_k)(\|\bpsi\|_2+\|\xi\|_1).
$$
The estimate (\ref{l2err}) follows from the above inequality and
the regularity assumption (\ref{reg}). We have completed the proof.
\end{proof}

\section{Numerical Experiments}\label{Section:numerical-experiments}

\subsection{Example 1}
Consider problem  \eqref{moment}--\eqref{bc} with $\Omega=(0,1)^2$.
The source term $\bf$ and the boundary value $\bg$ are chosen so that the exact solution is
\a{
    \bu(x,y)&=\p{\sin \pi y \\ \cos \pi x}, \quad
      p = \sin 2 \pi y .
}
In this example, we use uniform triangular grids shown in Figure \ref{grid1}.
In Table \ref{t1}, we list the errors and the orders of convergence.
We can see that the optimal order of convergence is achieved in all finite element
  methods.

\begin{figure}[h!]
 \begin{center} \setlength\unitlength{1.25pt}
\begin{picture}(260,80)(0,0)
  \def\tr{\begin{picture}(20,20)(0,0)\put(0,0){\line(1,0){20}}\put(0,20){\line(1,0){20}}
          \put(0,0){\line(0,1){20}} \put(20,0){\line(0,1){20}}
   \put(0,20){\line(1,-1){20}}   \end{picture}}
 {\setlength\unitlength{5pt}
 \multiput(0,0)(20,0){1}{\multiput(0,0)(0,20){1}{\tr}}}

  {\setlength\unitlength{2.5pt}
 \multiput(45,0)(20,0){2}{\multiput(0,0)(0,20){2}{\tr}}}

  \multiput(180,0)(20,0){4}{\multiput(0,0)(0,20){4}{\tr}}

 \end{picture}\end{center}
\caption{The first three levels of triangular grids for Example 1.}
\label{grid1}
\end{figure}
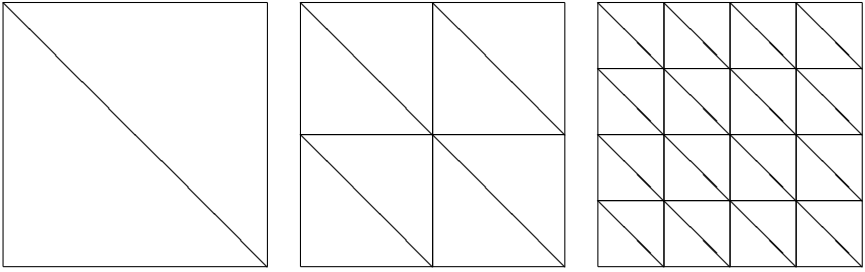

\begin{table}[h!]
  \centering   \renewcommand{\arraystretch}{1.05}
  \caption{Example 1:  Error profiles and convergence rates on grids shown in Figure \ref{grid1}. }
\label{t1}
\begin{tabular}{c|cc|cc|cc}
\hline
Grid & $\|\bu- \bu_h \|_0 $  &rate &  $\3bar \bu- \bu_h \3bar $ &rate
  &  $\|p - p_h \|_0 $ &rate   \\
\hline
 &\multicolumn{6}{c}{by the $P_1^2$-$P_0$ finite element } \\ \hline
 5&   0.9678E-03 &  1.90&   0.5352E-01 &  1.09&   0.3859E-01 &  1.76 \\
 6&   0.2530E-03 &  1.94&   0.2565E-01 &  1.06&   0.1257E-01 &  1.62 \\
 7&   0.6486E-04 &  1.96&   0.1250E-01 &  1.04&   0.4897E-02 &  1.36 \\
\hline
 &\multicolumn{6}{c}{by the $P_2^2$-$P_1$ finite element } \\ \hline
 4&   0.1598E-03 &  2.94&   0.1222E-01 &  1.95&   0.6123E-02 &  2.29 \\
 5&   0.2026E-04 &  2.98&   0.3091E-02 &  1.98&   0.1411E-02 &  2.12 \\
 6&   0.2544E-05 &  2.99&   0.7747E-03 &  2.00&   0.3384E-03 &  2.06 \\
\hline
 &\multicolumn{6}{c}{by the $P_3^2$-$P_2$ finite element } \\ \hline
 4&   0.5550E-05 &  4.01&   0.5624E-03 &  3.09&   0.1731E-02 &  2.75 \\
 5&   0.3409E-06 &  4.03&   0.6677E-04 &  3.07&   0.2267E-03 &  2.93 \\
 6&   0.2109E-07 &  4.01&   0.8113E-05 &  3.04&   0.2881E-04 &  2.98 \\
 \hline
 &\multicolumn{6}{c}{by the $P_4^2$-$P_3$ finite element } \\ \hline
 3&   0.6286E-05 &  5.24&   0.4739E-03 &  4.18&   0.4781E-03 &  4.90 \\
 4&   0.2057E-06 &  4.93&   0.3110E-04 &  3.93&   0.1493E-04 &  5.00 \\
 5&   0.6518E-08 &  4.98&   0.1978E-05 &  3.98&   0.6207E-06 &  4.59 \\
 \hline
\end{tabular}%
\end{table}%

\subsection{Example 2}
Consider problem  \eqref{moment}--\eqref{bc} with $\Omega=(0,1)^2$.
The source term $\bf$ is chosen so that the exact solution is
\a{
    \bu(x,y)&=\p{-256(x-x^2)^2(y-y^2)(2-4y) \\ 256(x-x^2)(2-4x)(y-y^2)^2}, \quad
      p = x+y-1.
}
In this example, we use polygonal grids, consisting of dodecagons (12 sided polygons) and
   heptagons (7 sided polygons), shown in Figure \ref{grid-12}.
In Table \ref{t12}, we list the errors and the orders of convergence.
We can see that the optimal order of convergence is achieved in all finite element
  methods.

\begin{figure}[htb]\begin{center}\setlength\unitlength{1.5in} 
    \begin{picture}(3.2,1.4)
 \put(0,0){\includegraphics[width=1.5in]{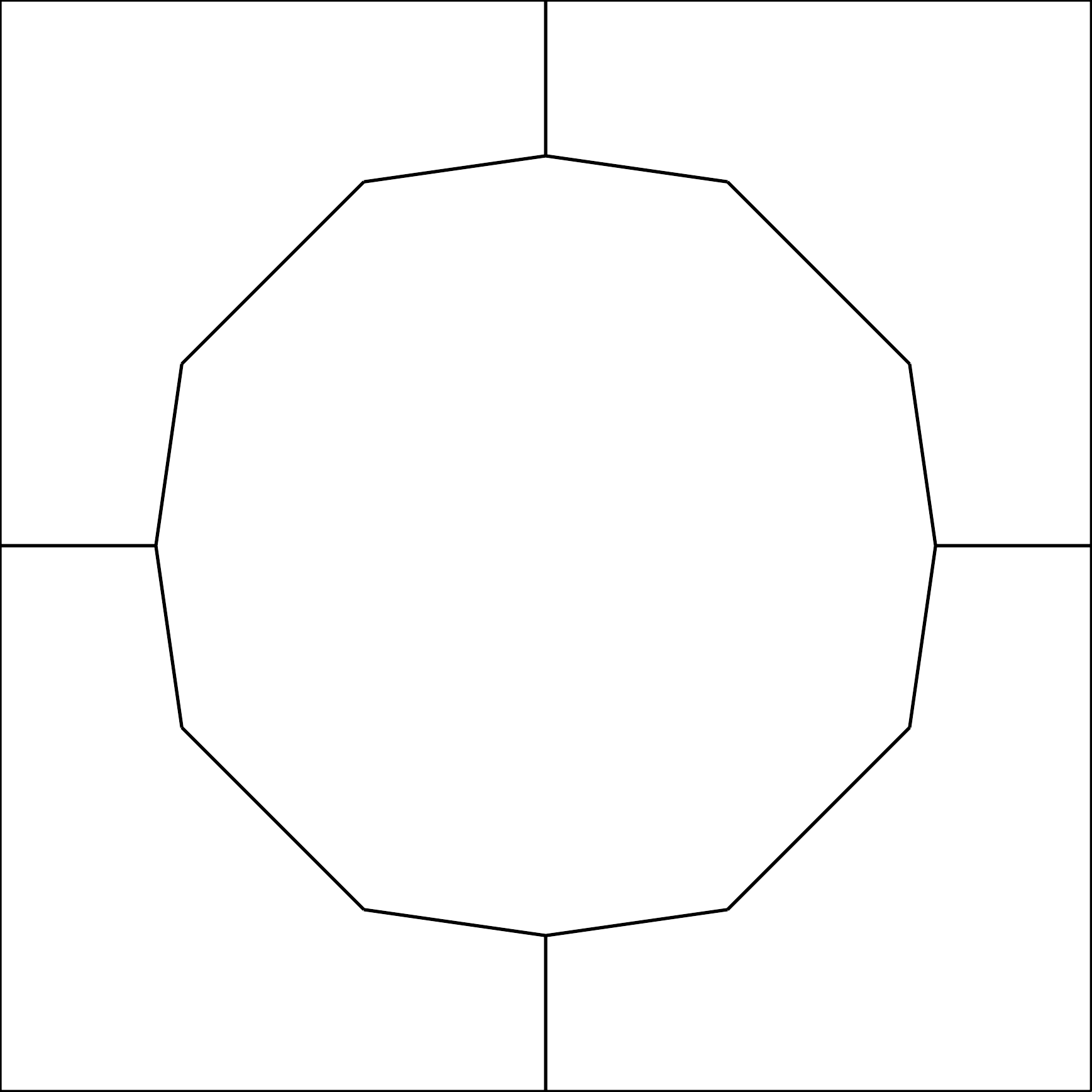}}  
 \put(1.1,0){\includegraphics[width=1.5in]{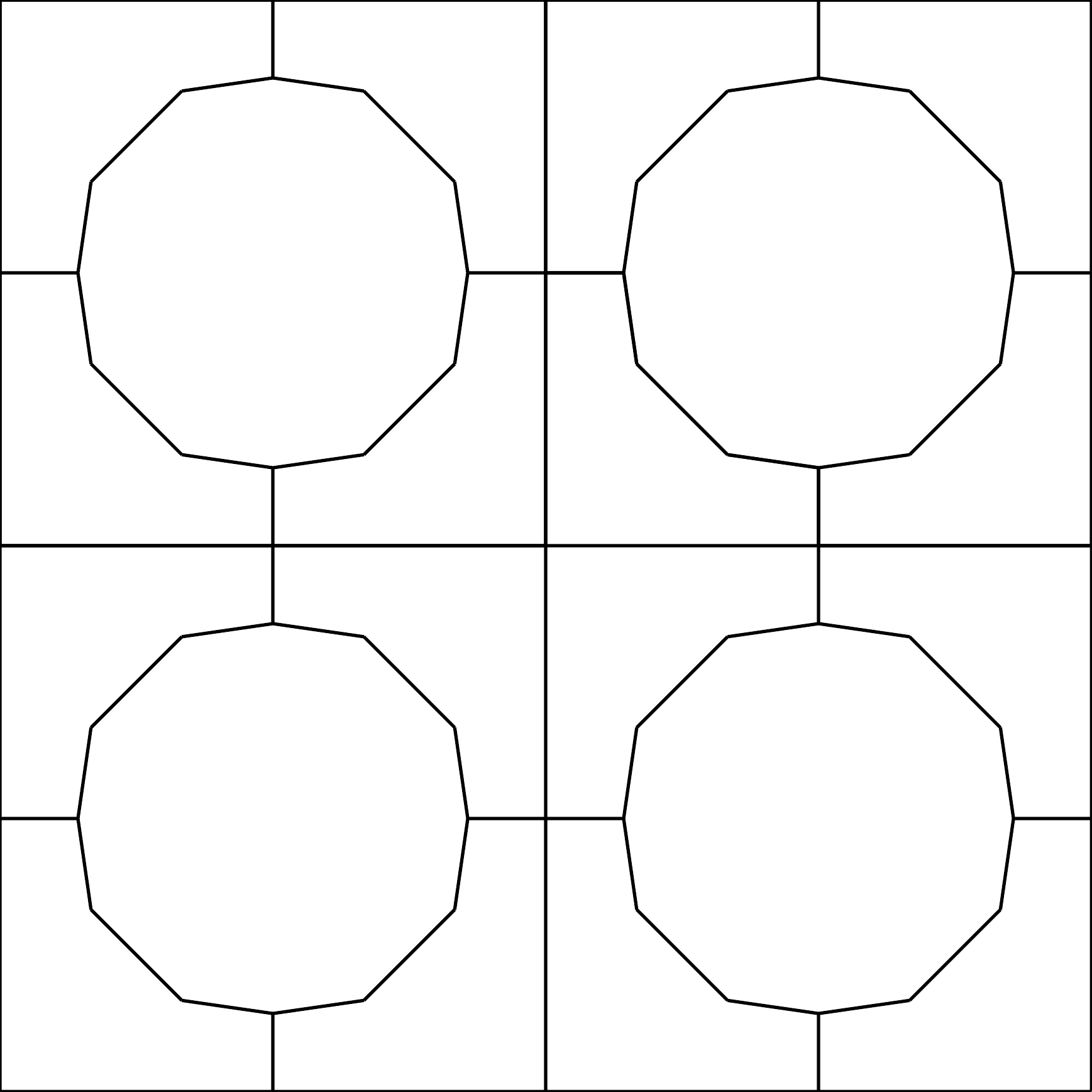}}
 \put(2.2,0){\includegraphics[width=1.5in]{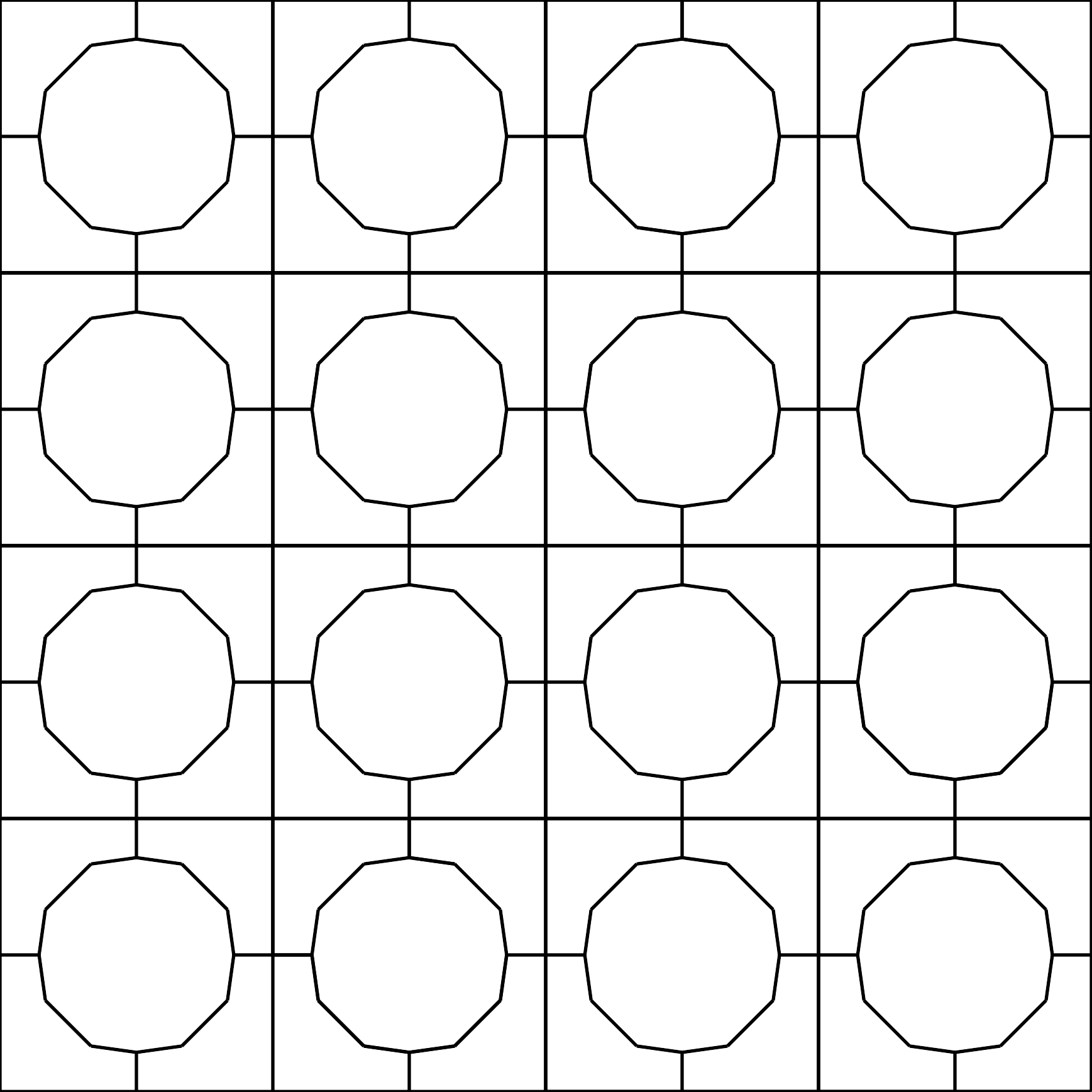}}
    \end{picture}
\caption{ The first three polygonal grids (consisting of dodecagons and heptagons)
   for the computation of Table \ref{t12} (Example 2).  } \label{grid-12}
\end{center}
\end{figure}

\begin{table}[h!]
  \centering   \renewcommand{\arraystretch}{1.05}
  \caption{Example 2:  Error profiles and convergence rates on grids shown
       in Figure \ref{grid-12}. }
\label{t12}
\begin{tabular}{c|cc|cc|cc}
\hline
Grid & $\|\bu- \bu_h \|_0 $  &rate &  $\3bar \bu- \bu_h \3bar $ &rate
  &  $\|p - p_h \|_0 $ &rate   \\
\hline
 &\multicolumn{6}{c}{by the $P_1^2$-$P_0$ finite element } \\ \hline
 4&   0.3844E-01 &  1.68&   0.7406E+00 &  0.93&   0.2329E+00 &  0.90 \\
 5&   0.1046E-01 &  1.88&   0.3741E+00 &  0.99&   0.9698E-01 &  1.26 \\
 6&   0.2708E-02 &  1.95&   0.1874E+00 &  1.00&   0.3982E-01 &  1.28 \\
\hline
 &\multicolumn{6}{c}{by the $P_2^2$-$P_1$ finite element } \\ \hline
3&   0.4929E-02 &  3.44&   0.2366E+00 &  2.20&   0.1052E+00 &  2.32 \\
 4&   0.6126E-03 &  3.01&   0.6352E-01 &  1.90&   0.2438E-01 &  2.11 \\
 5&   0.7694E-04 &  2.99&   0.1646E-01 &  1.95&   0.5619E-02 &  2.12 \\
\hline
 &\multicolumn{6}{c}{by the $P_3^2$-$P_2$ finite element } \\ \hline
 3&   0.3980E-03 &  3.76&   0.2366E-01 &  2.98&   0.1786E-01 &  2.67 \\
 4&   0.2812E-04 &  3.82&   0.3187E-02 &  2.89&   0.2590E-02 &  2.79 \\
 5&   0.1846E-05 &  3.93&   0.4061E-03 &  2.97&   0.3384E-03 &  2.94 \\
 \hline
 &\multicolumn{6}{c}{by the $P_4^2$-$P_3$ finite element } \\ \hline
 3&   0.3590E-04 &  4.81&   0.3079E-02 &  3.76&   0.2088E-02 &  3.76 \\
 4&   0.1173E-05 &  4.94&   0.2035E-03 &  3.92&   0.1316E-03 &  3.99 \\
 5&   0.3758E-07 &  4.96&   0.1300E-04 &  3.97&   0.7903E-05 &  4.06 \\
 \hline
\end{tabular}%
\end{table}%

\subsection{Example 3}
Consider problem \eqref{moment}--\eqref{bc} with $\Omega=(0,1)^3$.
The source term $\bf$ and the boundary value $\bg$ are chosen so that the exact solution is
\a{
    \bu(x,y)&=\p{y^4\\z^2\\x^2  }, \quad
       p = x-\frac 12.
}
We use tetrahedral meshes shown in Figure \ref{grid3d}.
The results of the 3D $P_k$-$P_{k+1}$ weak Galerkin finite element methods
    are listed in Table \ref{t2}.
The method is stable and is of optimal order convergence.

\begin{figure}[h!]
\begin{center}
 \setlength\unitlength{1pt}
    \begin{picture}(320,118)(0,3)
    \put(0,0){\begin{picture}(110,110)(0,0)
       \multiput(0,0)(80,0){2}{\line(0,1){80}}  \multiput(0,0)(0,80){2}{\line(1,0){80}}
       \multiput(0,80)(80,0){2}{\line(1,1){20}} \multiput(0,80)(20,20){2}{\line(1,0){80}}
       \multiput(80,0)(0,80){2}{\line(1,1){20}}  \multiput(80,0)(20,20){2}{\line(0,1){80}}
    \put(80,0){\line(-1,1){80}}\put(80,0){\line(1,5){20}}\put(80,80){\line(-3,1){60}}
      \end{picture}}
    \put(110,0){\begin{picture}(110,110)(0,0)
       \multiput(0,0)(40,0){3}{\line(0,1){80}}  \multiput(0,0)(0,40){3}{\line(1,0){80}}
       \multiput(0,80)(40,0){3}{\line(1,1){20}} \multiput(0,80)(10,10){3}{\line(1,0){80}}
       \multiput(80,0)(0,40){3}{\line(1,1){20}}  \multiput(80,0)(10,10){3}{\line(0,1){80}}
    \put(80,0){\line(-1,1){80}}\put(80,0){\line(1,5){20}}\put(80,80){\line(-3,1){60}}
       \multiput(40,0)(40,40){2}{\line(-1,1){40}}
        \multiput(80,40)(10,-30){2}{\line(1,5){10}}
        \multiput(40,80)(50,10){2}{\line(-3,1){30}}
      \end{picture}}
    \put(220,0){\begin{picture}(110,110)(0,0)
       \multiput(0,0)(20,0){5}{\line(0,1){80}}  \multiput(0,0)(0,20){5}{\line(1,0){80}}
       \multiput(0,80)(20,0){5}{\line(1,1){20}} \multiput(0,80)(5,5){5}{\line(1,0){80}}
       \multiput(80,0)(0,20){5}{\line(1,1){20}}  \multiput(80,0)(5,5){5}{\line(0,1){80}}
    \put(80,0){\line(-1,1){80}}\put(80,0){\line(1,5){20}}\put(80,80){\line(-3,1){60}}
       \multiput(40,0)(40,40){2}{\line(-1,1){40}}
        \multiput(80,40)(10,-30){2}{\line(1,5){10}}
        \multiput(40,80)(50,10){2}{\line(-3,1){30}}

       \multiput(20,0)(60,60){2}{\line(-1,1){20}}   \multiput(60,0)(20,20){2}{\line(-1,1){60}}
        \multiput(80,60)(15,-45){2}{\line(1,5){5}} \multiput(80,20)(5,-15){2}{\line(1,5){15}}
        \multiput(20,80)(75,15){2}{\line(-3,1){15}}\multiput(60,80)(25,5){2}{\line(-3,1){45}}
      \end{picture}}

    \end{picture}
    \end{center}
\caption{  The first three levels of grids used in Example 3. }
\label{grid3d}
\end{figure}
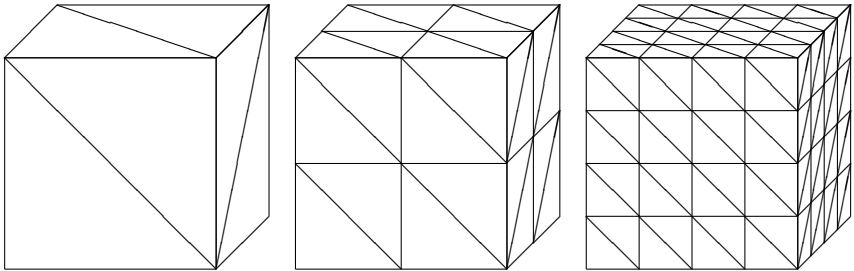

\begin{table}[h!]
  \centering   \renewcommand{\arraystretch}{1.05}
  \caption{Example 3:  Error profiles and convergence rates on grids shown in Figure \ref{grid3d}. }
\label{t2}
\begin{tabular}{c|cc|cc|cc}
\hline
Grid &  $\3bar \bu- \bu_h \3bar  $  &rate & $\|\bu- \bu_h \|_0 $ &rate
  &  $\|p - p_h \|_0 $ &rate   \\
\hline
 &\multicolumn{6}{c}{by the 3D $P_2^2$-$P_1$ finite element } \\ \hline
 1&    0.1845E+00& 0.00&    0.1289E-01& 0.00&    0.2125E+00& 0.00 \\
 2&    0.5331E-01& 1.79&    0.2383E-02& 2.44&    0.3299E-01& 2.69 \\
 3&    0.1422E-01& 1.91&    0.3475E-03& 2.78&    0.6230E-02& 2.40 \\
\hline
 &\multicolumn{6}{c}{by the 3D $P_3^2$-$P_2$ finite element } \\ \hline
 1&    0.3237E-01& 0.00&    0.1677E-02& 0.00&    0.2406E-01& 0.00 \\
 2&    0.4013E-02& 3.01&    0.1390E-03& 3.59&    0.2270E-02& 3.41 \\
 3&    0.5015E-03& 3.00&    0.9991E-05& 3.80&    0.2350E-03& 3.27 \\
 \hline
\end{tabular}%
\end{table}%

\end{document}